\newtheorem{thm}{Theorem}[section]
\newtheorem*{deft}{Definition}
\newtheorem*{notat}{Notations}
\newtheorem{prop}{Proposition}[section]
\newtheorem{lem}{Lemma}[section]
\numberwithin{equation}{section}
\theoremstyle{theorem}
\newtheorem{rques}{\textbf{Remarks}}[section]{\vskip 0.5cm}
\newtheorem{rque}{\textbf{Remark}}[section]{\vskip 0.5cm} 
\newtheorem*{ack}{\textbf{Acknowledgments}}{\vskip 0.5cm} 
\title{On the three-dimensional finite Larmor radius approximation:  the case of electrons in a fixed background of ions} 
\author{Daniel Han-Kwan\footnote{\'Ecole Normale Sup\'erieure, D\'epartement de Math\'ematiques et Applications,  45 rue d'Ulm 75230 Paris Cedex 05 France, email : {hankwan@dma.ens.fr}}}
\date{}
\begin{document}
 \maketitle

\begin{abstract}
This paper is concerned with the analysis of a mathematical model arising in plasma physics, more specifically in fusion research. It directly follows \cite{DHK1}, where the tri-dimensional analysis of a Vlasov-Poisson equation with finite Larmor radius scaling was led, corresponding to the case of ions with massless electrons whose density follows a linearized Maxwell-Boltzmann law. We now consider the case of electrons in a background of fixed ions, which was only sketched in \cite{DHK1}. Unfortunately, there is evidence that the formal limit is false in general. Nevertheless, we formally derive a fluid system for particular monokinetic data. We prove the local in time existence of analytic solutions and rigorously study the limit (when the Debye length vanishes) to a new anisotropic fluid system. This is achieved thanks to Cauchy-Kovalevskaya type techniques, as introduced by Caflisch \cite{Caf} and Grenier \cite{Gre1}. We finally show that this approach fails in Sobolev regularity, due to multi-fluid instabilities.
\end{abstract}

\textbf{Keywords}: Finite Larmor Radius Approximation - Anisotropic quasineutral limit - Anisotropic hydrodynamic systems - Cauchy-Kovalevskaya theorem - Ill-posedness in Sobolev spaces.

\section{Introduction}
\subsection{Presentation of the problem}

The main goal of this paper is to derive some fluid model in order to understand the behaviour of a quasineutral gas of electrons in a neutralizing background of fixed ions and submitted to a strong magnetic field. For simplicity, we consider that the magnetic field has fixed direction and intensity. The density of the electrons is governed by the classical Vlasov-Poisson equation.
We first introduce some notations:

\begin{notat}

\begin{itemize}
Let $(e_1,e_2,e_\parallel)$ be a fixed orthonormal basis of $\mathbb{R}^3$.
\item The subscript $\perp$ stands for the orthogonal projection on the plane $(e_1,e_2)$, while the subscript $\parallel$ stands for the projection on $e_\parallel$ .
\item For any vector $X=(X_1,X_2,X_\parallel)$, we define $X^\perp$ as the vector $(X_y,-X_x,0)=X\wedge e_\parallel$.
\item We define the differential operators $\Delta_{x_\parallel}= \partial_{x_\parallel}^2$ and $\Delta_{x_\perp}=\partial^2_{x_1} + \partial^2_{x_2}$.
\end{itemize}
\end{notat}

The scaling we consider (we refer to the appendix for physical explanations) leads to the study of the scaled Vlasov-Poisson system (for $t>0, x \in \mathbb{T}^3:= \mathbb{R}^3/ \mathbb{Z}^3, v \in \mathbb{R}^3$):

  \begin{equation}
  \label{kinbegin}
\left\{
    \begin{array}{ll}
    \partial_{t} f_\epsilon + \frac{v_\perp}{\epsilon}.\nabla_{x} f_\epsilon + v_\parallel.\nabla_{x} f_\epsilon + (E_\epsilon+ \frac{v\wedge e_\parallel}{\epsilon}).\nabla_{v} f_\epsilon = 0 \\
  E_\epsilon= (-\nabla_{x_\perp} {V}_\epsilon, -\epsilon\nabla_{x_\parallel} {V}_\epsilon)  \\
  -\epsilon^2\Delta_{x_\parallel} {V}_\epsilon -\Delta_{x_\perp} {V}_\epsilon = \int f_\epsilon dv - \int f_\epsilon dvdx\\
     f_{\epsilon, t=0}=f_{\epsilon,0}\geq 0, \quad \int f_{\epsilon,0}dvdx=1.
\end{array}
  \right.
\end{equation}
The quantity $f_\epsilon(t,x,v)$ is interpreted as the distribution function of the electrons: this means that $f_\epsilon(t,x,v) dx dv$ is the probability of finding particles at time $t$ with position $x$ and velocity $v$; $V_\epsilon(t,x)$ and $E_\epsilon(t,x)$ are respectively the electric potential and force.

This corresponds to the so-called finite Larmor radius scaling for the Vlasov-Poisson equation, which was introduced by Frénod and Sonnendrücker in the mathematical literature \cite{FS2}. The $2D$ version of the system (obtained when one restricts to the perpendicular dynamics) was studied in \cite{FS2} and more recently in \cite{Bos} and \cite{GHN}. A version of the full $3D$ system describing ions with massless electrons was studied by the author in \cite{DHK1}. In this work, we considered that the density of electrons  follows a linearized Maxwell-Boltzmann law. This means that we studied the following Poisson equation for the electric potential:
\begin{equation}
V_\epsilon  -\epsilon^2\Delta_{x_\parallel} {V}_\epsilon -\Delta_{x_\perp} {V}_\epsilon = \int f_\epsilon dv - \int f_\epsilon dvdx.
\end{equation}
In this case it was shown after some filtering that the number density $f_\epsilon$ weakly-* converges to some solution $f$ to another kinetic system exhibiting the so-called $E\times B$ drift in the orthogonal plane, but with trivial dynamics in the parallel direction. This last feature seems somehow disappointing.

We observed in \cite{DHK1} that in the case where the Poisson equation reads:
\begin{equation}
  -\epsilon^2\Delta_{x_\parallel} {V}_\epsilon -\Delta_{x_\perp} {V}_\epsilon = \int f_\epsilon dv - \int f_\epsilon dvdx,
  \end{equation}
 we could expect to make a pressure appear in the limit process $\epsilon \rightarrow 0$, due to the incompressibility constraint:
  \[
  \int f dv dx_\perp =\int f dvdx.
\]
  
Unfortunately, we were not able to rigorously derive a kinetic limit or even a fluid limit  from (\ref{kinbegin}). This is not only due to technical mathematical difficulties. This is related to the existence of instabilities for the Vlasov-Poisson equation, such as the double-humped instabilities (see Guo and Strauss \cite{GS}) and their counterpart in the multi-fluid Euler equations, such as the two-stream instabilities (see Cordier, Grenier and Guo \cite{CGG}). Such instabilities actually take over in the limit $\epsilon \rightarrow 0$ and the formal limit is false in general, unless $f_{\epsilon,0}$ does not depend on parallel variables, which corresponds to the $2D$ problem studied by Frénod and Sonnendrücker \cite{FS2}. 

Actually, we can observe that if on the contrary the initial data $f_{\epsilon,0}$ depends only on parallel variables, we obtain the one-dimensional quasineutral system:
  \begin{equation}
\left\{
    \begin{array}{ll}
    \partial_{t} f_\epsilon + v_\parallel \partial_{x_\parallel} f_\epsilon -\partial_{x_\parallel} V_\epsilon \partial_{v_\parallel} f_\epsilon = 0 \\
  -\epsilon^2\partial^2_{x_\parallel} {V}_\epsilon = \int f_\epsilon dv - \int f_\epsilon dvdx_\parallel\\
     f_{\epsilon, t=0}=f_{\epsilon,0}\geq 0, \quad \int f_{\epsilon,0}dvdx_\parallel=1.
\end{array}
  \right.
\end{equation}

The formal limit is easily obtained, by taking $\epsilon=0$:
  \begin{equation}
\left\{
    \begin{array}{ll}
    \partial_{t} f + v_\parallel \partial_{x_\parallel} f -\partial_{x_\parallel} V \partial_{v_\parallel} f = 0 \\
  -\epsilon^2\partial^2_{x_\parallel} {V} = \int f dv - \int f dvdx_\parallel\\
     f_{ t=0}=f_{0}\geq 0, \quad \int f_{0}dvdx_\parallel=1.
\end{array}
  \right.
\end{equation}

In \cite{Gr99}, an explicit example of Grenier shows that the formal limit is false in general, because of the double-humped instability:

\begin{thm}[\cite{Gr99}]For any $N$ and $s$ in $\mathbb{N}$, and for any $\epsilon<1$, there exist for $i=1,2,3,4$, $v_i^\epsilon(x)\in H^s(\mathbb{T})$ with $\Vert v_1^\epsilon(x)+1 \Vert_{H^s} \leq \epsilon^N$, $\Vert v_2^\epsilon(x)+1/2 \Vert_{H^s} \leq \epsilon^N$, $\Vert v_3^\epsilon(x)-1/2 \Vert_{H^s} \leq \epsilon^N$, $\Vert v_4^\epsilon(x)-1 \Vert_{H^s} \leq \epsilon^N$, such that the solution $f_\epsilon(t,x,v)$ associated to the initial data defined by:
\begin{eqnarray*}
f_{\epsilon,0}(x,v)&=&1 \quad \text{for} \quad v_1^\epsilon(x) \leq v \leq v_2^\epsilon(x) \text{      and      } v_3^\epsilon(x) \leq v \leq v_4^\epsilon(x) \\
&=& 0 \quad \text{elsewhere}.
\end{eqnarray*}
We also define $f_0$ by:
\begin{eqnarray*}
f_{0}(x,v)&=&1 \quad \text{for} \quad -1 \leq v \leq -1/2 \text{      and      }1/2  \leq v \leq 1 \\
&=& 0 \quad \text{elsewhere}.
\end{eqnarray*}

Then $f_\epsilon$ does not converge to $f_0$ in the following sense:
\begin{equation}
\liminf_{\epsilon\rightarrow 0} \sup_{t\leq T} \int \vert f_\epsilon(t,x,v) - f_0(v) \vert v^2 dv dx > 0
\end{equation}
for any $T>0$ and also for $T= \epsilon^\alpha$, with $\alpha<1/2$.
\end{thm}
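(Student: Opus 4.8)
The plan is to convert the $\epsilon$--problem, by a space-time dilation, into the \emph{unscaled} Vlasov--Poisson system, for which $f_0$ is a two-bump equilibrium that is linearly unstable in the sense of Penrose (this is exactly the double-humped instability of Guo and Strauss \cite{GS}); the instability then develops on the fast time scale $t\sim\epsilon$ and overwhelms the polynomially small perturbation built into the $v_i^\epsilon$. Concretely, setting $y=x/\epsilon$, $\tau=t/\epsilon$ and using $-\epsilon^2\partial_{x}^2=-\partial_y^2$, the one-dimensional quasineutral Vlasov--Poisson system written above becomes
\begin{equation}
\partial_\tau f+v\,\partial_y f-\partial_y V\,\partial_v f=0,\qquad -\partial_y^2 V=\int f\,dv-1,
\end{equation}
now independent of $\epsilon$ but posed on the dilated torus $\mathbb{R}/(\epsilon^{-1}\mathbb{Z})$. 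Taking $\epsilon=1/n$ preserves periodicity; a mode of frequency $1$ in $y$ is a mode of frequency $\epsilon^{-1}$ in $x$, and a time $\tau=O(1)$ corresponds to $t=O(\epsilon)$. Since $f_0$ does not depend on $x$, it is a stationary solution of both systems.

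The double water-bag form of the data is preserved by the Vlasov flow for every fixed $\epsilon>0$, so $f_\epsilon=\mathbf{1}_{\{v_1<v<v_2\}}+\mathbf{1}_{\{v_3<v<v_4\}}$ stays of this form and is entirely described by the four curves $v_i(\tau,y)$, which solve the multi-fluid Euler--Poisson system
\begin{equation}
\partial_\tau v_i+v_i\,\partial_y v_i=-\partial_y V,\qquad -\partial_y^2 V=(v_2-v_1)+(v_4-v_3)-1 .
\end{equation}
Linearizing about $(\bar v_1,\bar v_2,\bar v_3,\bar v_4)=(-1,-\tfrac12,\tfrac12,1)$ and seeking modes proportional to $e^{\lambda\tau+iy}$ yields a dispersion relation possessing a root $\lambda$ with $\mathrm{Re}\,\lambda>0$: this is the two-stream instability of the multi-fluid system (Cordier, Grenier and Guo \cite{CGG}), the fluid counterpart of the kinetic double-humped instability. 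I would fix such an unstable eigenmode $\mathcal V=(V_1,\dots,V_4)$.

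Following the Cauchy--Kovalevskaya scheme of Caflisch \cite{Caf} and Grenier \cite{Gre1}, I would then construct an approximate solution as a convergent analytic expansion
\begin{equation}
v_i^{\mathrm{app}}(\tau,y)=\bar v_i+\sum_{k\ge1}\delta^k\,e^{k\lambda\tau}\,V_i^{(k)}(y),\qquad V_i^{(1)}(y)=V_i\,e^{iy},
\end{equation}
whose successive terms grow like $e^{k\,\mathrm{Re}\,\lambda\,\tau}$, and control the series in an analytic norm on an interval of length $O(1)$ in $\tau$. A Gronwall estimate in the same scale should show that the genuine curves remain within $O(\delta^2 e^{2\mathrm{Re}\,\lambda\,\tau})$ of $v_i^{\mathrm{app}}$ as long as the leading term $\delta e^{\mathrm{Re}\,\lambda\,\tau}$ stays small. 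Choosing the initial amplitude $\delta=\epsilon^{M}$ with $M=M(N,s)$ large enough that the frequency-$\epsilon^{-1}$ data obeys $\Vert v_i^\epsilon-\bar v_i\Vert_{H^s}\le\epsilon^N$, the leading mode reaches size $O(1)$ at $\tau^\star\sim(\mathrm{Re}\,\lambda)^{-1}\ln(1/\delta)$, that is, at $t^\star\sim\epsilon\,M\,\ln(1/\epsilon)$. Since $t^\star\to0$ and $t^\star\ll\epsilon^\alpha$ for the stated range of $\alpha$, both a fixed window $[0,T]$ and the window $[0,\epsilon^\alpha]$ contain $\tau^\star$, so by some $t\le T$ the curves $v_i$ are displaced by an $O(1)$ amount at frequency $\epsilon^{-1}$; this forces an $O(1)$ rearrangement of $f_\epsilon$ in the $v$ variable (visible, through conservation of energy, in the electric field reaching unit size), whence $\int|f_\epsilon-f_0|\,v^2\,dv\,dx\gtrsim1$ uniformly, which is the asserted $\liminf>0$.

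The crux is the amplification step: closing the remainder/stability estimate \emph{uniformly in} $\epsilon$, in a norm strong enough to transfer the linear growth $e^{\mathrm{Re}\,\lambda\,\tau}$ into a genuine nonlinear departure while the amplitude travels from $\epsilon^M$ up to $O(1)$, and keeping the water-bag curves ordered throughout. The precise admissible time window, and hence the threshold $\alpha<1/2$, is exactly what this quantitative control should pin down.
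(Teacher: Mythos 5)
First, a point of comparison: the paper does not prove this statement. It is quoted verbatim from Grenier's note \cite{Gr99} as motivation for the instability discussion, so there is no internal proof to measure yours against. Your strategy --- dilate $(x,t)$ by the Debye length to reduce to the unscaled Vlasov--Poisson system, observe that the double water-bag data is exactly equivalent to a four-phase Euler--Poisson system for the curves $v_i$, exhibit an unstable two-stream eigenmode, and amplify it nonlinearly via the Caflisch--Grenier approximate-solution scheme --- is precisely the architecture of Grenier's original argument, so the plan is the right one.

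That said, the step you yourself call ``the crux'' is the entire content of the theorem, and you have only announced it. Concretely: (a) in the expansion $\sum_k\delta^k e^{k\lambda\tau}V_i^{(k)}$ you must take $\lambda$ with $\mathrm{Re}\,\lambda$ equal (or arbitrarily close) to the \emph{maximal} growth rate over all admissible wavenumbers, otherwise the higher harmonics $V_i^{(k)}$ are forced by source terms growing like $e^{k\,\mathrm{Re}\lambda\,\tau}$ but solve linear problems that may themselves grow faster, and the series cannot be summed; (b) the admissible wavenumbers on $\mathbb{R}/(\epsilon^{-1}\mathbb{Z})$ are $2\pi\epsilon\mathbb{Z}$, and since the conclusion is a $\liminf$ over \emph{all} $\epsilon\to0$ you cannot restrict to $\epsilon=1/n$; take instead $k_\epsilon\in2\pi\epsilon\mathbb{Z}$ with $k_\epsilon\to k_0$ inside the unstable band and use continuity of the growth rate in $k$; (c) the Gronwall estimate should be stopped at the first time the perturbation reaches a small fixed threshold $\eta_0$ rather than $O(1)$, both to keep the curves ordered and graph-like (no Burgers folding, which would destroy the water-bag representation) and to close the remainder estimate; an $\eta_0$-displacement of the curves on a fixed fraction of the torus already yields $\int\vert f_\epsilon-f_0\vert v^2\,dv\,dx\gtrsim\eta_0$ after undoing the dilation, which is all the $\liminf$ requires. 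Finally, check the normalization: with $-\epsilon^2\partial_x^2V_\epsilon=\rho_\epsilon-1$ your dilation gives growth $e^{\sigma t/\epsilon}$ and instability times $O(\epsilon\log(1/\epsilon))$, which would prove the conclusion for every $\alpha<1$; the threshold $\alpha<1/2$ in the statement corresponds to a Debye length $\sqrt{\epsilon}$ (a single power of $\epsilon$ in the Poisson equation, as in the rest of the paper), for which the dilation is by $\sqrt{\epsilon}$ and the instability time is $O(\sqrt{\epsilon}\log(1/\epsilon))$. Your argument is insensitive to this choice, but the exponent in $T=\epsilon^\alpha$ is not, so fix the scaling before quantifying the time window.
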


In order to overcome the effects of these instabilities for the usual quasineutral limit, there are two possibilities:
\begin{itemize}
\item One consists in restricting to particular initial profiles chosen in order to be stable (this would imply in particular some monotony conditions on the data, such as the Penrose condition \cite{Pen}).
\item The other one consists in considering data with analytic regularity, in which case the instabilities ( which are essentially of ``Sobolev'' nature) do not have any effect.
\end{itemize}

Here the situation is worst: by opposition to the usual quasineutral limit (see \cite{Br}, \cite{Gr99}), restricting to stable profiles is not sufficient. This is due to the anisotropy of the problem and the dynamics in the perpendicular variables.

In this paper, we illustrate this phenomenon by  formally deriving the following fluid system, obtained from the kinetic system (\ref{kinbegin}) by considering some physically relevant monokinetic data  (we refer to the appendix for the detailed formal derivation).

\begin{equation}
\label{sys}
 \left\{
    \begin{array}{ll}
  \partial_t \rho_\epsilon + \nabla_\perp (E^\perp_\epsilon \rho_\epsilon) + \partial_\parallel(v_{\parallel,\epsilon} \rho_\epsilon)= 0 \\
  \partial_t v_{\parallel,\epsilon} + \nabla_\perp (E^\perp_\epsilon v_{\parallel,\epsilon}) + v_{\parallel,\epsilon} \partial_\parallel(v_{\parallel,\epsilon}) = -\epsilon\partial_\parallel \phi_\epsilon(t,x) -\partial_\parallel V_\epsilon(t,x_\parallel) \\
  E^\perp_\epsilon= -\nabla^\perp \phi_\epsilon \\
-\epsilon^2 \partial^2_{\parallel} \phi_\epsilon - \Delta_{\perp} \phi_\epsilon = \rho_\epsilon - \int \rho_\epsilon dx_\perp\\
  -\epsilon \partial_{\parallel}^2 V_\epsilon =\int \rho_\epsilon dx_\perp -1,\\
\end{array}
  \right.
\end{equation}
where:

\begin{itemize}\item$\rho_\epsilon(t,x_\perp,x_\parallel) : \mathbb{R}^+ \times \mathbb{T}^3 \rightarrow \mathbb{R}^+_*$ can be interpreted as a charge density,

\item $v_{\parallel,\epsilon}(t,x_\perp,x_\parallel)  : \mathbb{R}^+ \times \mathbb{T}^3 \rightarrow \mathbb{R}$ can be interpreted as a ``parallel'' current density. 
 
\item $\phi_\epsilon(t,x_\parallel)$ and $V_{\epsilon}(t,x)$ are electric potentials.

\end{itemize}

Although we have considerered monokinetic data, (\ref{sys}) is intrinsically a ``multi-fluid'' system, because of the dependence on $x_\perp$. Hence, we still have to face the two-stream instabilities (\cite{CGG}): because of these, the limit is false in Sobolev regularity and we thus decide to study the associated Cauchy problem for analytic data. 

We then prove the limit to a new fluid system which is strictly speaking compressible but also somehow ``incompressible in average''. This rather unusual feature is due to the anisotropy of the model. The fluid system is the following (obtained formally by taking $\epsilon=0$):
\begin{equation}
\label{simple}
\left\{
    \begin{array}{ll}
  \partial_t \rho + \nabla_\perp (E^\perp \rho) + \partial_\parallel(v_\parallel \rho)= 0 \\
  \partial_t v_\parallel + \nabla_\perp (E^\perp v_\parallel) + v_\parallel \partial_\parallel(v_\parallel) = -\partial_\parallel p(t,x_\parallel) \\
  E^\perp= \nabla^\perp \Delta_\perp^{-1} \left(\rho - \int \rho dx_\perp\right)\\
  \int \rho dx_\perp = 1.\\
\end{array}
  \right.
\end{equation}

We observe that this system can be interpreted as an infinite system of Euler-type equations, coupled together through the ``parameter'' $x_\perp$. It has some interesting features:

\begin{itemize}
 \item This system is highly anisotropic in $x_\perp$ and $x_\parallel$. The $2D$ part of the dynamics of the equation for $\rho$ is nothing but the vorticity formulation of $2D$ incompressible Euler. Physically speaking, $\rho$ should be interpreted here as a density rather than a vorticity. The dynamics in the parallel direction is similar to the dynamics of incompressible Euler written in velocity. We finally observe that the pressure $p$ only depends on the parallel variable $x_\parallel$ and not on $x_\perp$.
 \item This does not strictly speaking describe an incompressible fluid, since $(E^\perp, v_\parallel)$ is not divergence free. Somehow, the fluid is hence compressible. But the constraint $ \int \rho dx_\perp = 1$ can be interpreted as a constraint of ``incompressibility in average'' which allows one to recover the pressure law from the other unknowns. Indeed, we easily get, thanks to the equation on $\rho$: 
\begin{equation}
 \partial_{x_\parallel} \int \rho v_\parallel dx_\perp =0.
\end{equation}
So by plugging this constraint in the equation on $\rho v_\parallel$:
\begin{equation*}
 \partial_t (\rho v_\parallel) + \nabla_\perp (E^\perp \rho_\parallel v_\parallel) +  \partial_\parallel(\rho v_\parallel^2) = -\partial_\parallel p(t,x_\parallel)\rho,
\end{equation*}
we get the (one-dimensional !) elliptic equation allowing to recover $-\partial_{x_\parallel} p$:
\begin{equation*}
 - \partial^2_\parallel p(t,x_\parallel) = \partial^2_\parallel \int \rho v_\parallel^2 dx_\perp,
\end{equation*}
from which we get:
\begin{equation}
 - \partial_\parallel p(t,x_\parallel) = \partial_\parallel \int \rho v_\parallel^2 dx_\perp.
\end{equation}
\item From the point of view of plasma physics, $E^\perp.\nabla_\perp$ is the so-called electric drift. By analogy with the so-called drift-kinetic equations \cite{Wes}, we can call this system a drift-fluid equation. To the best of our knowledge, this is the very first time such a model is exhibited in the literature.

\end{itemize}

From now on, when there is no risk of confusion, we will sometimes write $v$ and $v_\epsilon$ instead of $v_\parallel$ and $v_{\parallel,\epsilon}$.

\subsection{Organization of the paper}

The outline of this paper is as follows. In Section \ref{sec-results}, we will state the main results of this paper that are: the existence of analytic solutions to (\ref{sys}) locally in time but uniformly in $\epsilon$ (Theorem \ref{exi}), the strong convergence to (\ref{simple}) with a complete description of the plasma oscillations (Theorem \ref{con}) and finally the existence and uniqueness of local analytic solutions to (\ref{simple}), in Proposition \ref{exi2}.

Section \ref{sec-proof1} is devoted to the proof of Theorem \ref{exi}. First we recall some elementary features of the analytic spaces we consider (section \ref{sec-func}), then we implement an approximation scheme for our Cauchy-Kovalesvkaya type existence theorem. The results are based on a decomposition of the electric field allowing for a good understanding of the so-called plasma waves (section \ref{sec-waves}).

In section \ref{sec-proof2}, we prove Theorem \ref{con}, by using the uniform in $\epsilon$ estimates we have obtained in the previous theorem. The proof relies on another decomposition of the electric field, in order to exhibit the effects of the plasma waves as $\epsilon$ goes to $0$.

Then, in section \ref{sec-sharp}, we discuss the sharpness of our results:
\begin{itemize}
\item In sections \ref{sec-analytic1} and \ref{sec-analytic2}, we discuss the analyticity assumption and explain why we can not lower down the regularity to Sobolev. In section \ref{sec-local}, we explain why it is not possible to obtain global in time results. We obtain these results  by considering some well-chosen initial data and using results of Brenier on multi-fluid Euler systems \cite{Br3}.

\item Because of the multi-stream instabilities, studying the limit with the relative entropy method is bound to fail. Nevertheless we found it interesting to try to apply the method and see at which point things get nasty: this is the object of section \ref{sec-rela}, where we study a kinetic toy model which retains the main unstable feature of system (\ref{sys}). 

\end{itemize}

The two last sections are respectively a short conclusion and an appendix where we explain the scaling and the formal derivation of system (\ref{sys}).

\section{Statement of the results}
\label{sec-results}
In order to prove both the existence of strong solutions to systems (\ref{sys}) and (\ref{simple}) and also prove the results of convergence, we follow the construction of Grenier \cite{Gre1}, with some modifications adapted to our problem. 

In \cite{Gre1}, Grenier studies the quasineutral limit of the family of coupled Euler-Poisson systems:

\begin{equation}
\label{coupledeuler}
\left\{
    \begin{array}{ll}
  \partial_t \rho_\Theta^\epsilon  + \operatorname{div}( \rho_\Theta^\epsilon v_\Theta^\epsilon)= 0 \\
  \partial_t v_\Theta^\epsilon +  v_\Theta^\epsilon.\nabla(v_\Theta^\epsilon) = E^\epsilon \\
  \operatorname{rot} E^\epsilon = 0 \\
  \epsilon \operatorname{div}E^\epsilon = \int_M \rho_\Theta^\epsilon \mu(d\Theta)  - 1,\\
\end{array}
  \right.
\end{equation}
with $(M, \Theta,  \mu)$ a probability space.

Following the proof of the Cauchy-Kovalevskaya theorem given by Caflisch \cite{Caf}, Grenier proved the local existence of analytic functions (with respect to $x$) uniformly with respect to $\epsilon$ and then, after filtering the fast oscillations due to the force field, showed the strong convergence  to the system:
\begin{equation}
\label{grenierS}
\left\{
    \begin{array}{ll}
  \partial_t \rho_\Theta  + \operatorname{div}( \rho_\Theta v_\Theta)= 0 \\
  \partial_t v_\Theta +  v_\Theta^\epsilon.\nabla(v_\Theta) = E \\
  \operatorname{rot} E = 0 \\
  \int \rho_\Theta \mu(d\Theta)  = 1.\\
\end{array}
  \right.
\end{equation}

We notice that the class of systems studied by Grenier is close to system (\ref{sys}), if we take  $x=x_\parallel$, $\Theta=x_\perp$ and $(M, \mu)=(\mathbb{T}^2, dx_\perp)$, the main difference being that we have to deal with a dynamics in $\Theta=x_\perp$.

Hence, we introduce the same spaces of analytic functions as in \cite{Gre1}, but this time depending also on $\Theta=x_\perp$.

\begin{deft}
 Let $\delta>1$. We define $B_\delta$ the space of real functions $\phi$ on $\mathbb{T}^3$ such that
\begin{equation}
\label{norm}
 \vert \phi \vert_\delta= \sum_{k \in \mathbb{Z}^3} \vert \mathcal{F} \phi (k) \vert \delta^{\vert k \vert} < +\infty,
\end{equation}
where $\mathcal{F}\phi(k)$ is the k-th Fourier coefficient of $\phi$ defined by:
$$\mathcal{F}\phi(k)=  \int_{\mathbb{T}^3} \phi(x) e^{-i2\pi k.x} dx.$$
\end{deft}

The first theorem proves the existence of local analytic solutions of (\ref{sys}) with a life span uniform in $\epsilon$. 

\begin{thm}
\label{exi}
 Let $\delta_0>1$. Let $\rho_\epsilon (0)$ and $v_\epsilon (0)$ be two bounded families of $B_{\delta_0}$ such that $\int \rho_\epsilon(0) dx=1$ and:
\begin{equation}
 \left\Vert \int \rho_\epsilon(0) dx_\perp - 1\right\Vert_{B_{\delta_0}} \leq C \sqrt{\epsilon},
\end{equation}
then there exists $\eta>0$ such that for every $\delta_1 \in ]1, \delta_0[$, for any $\epsilon>0$, there exists a unique strong solution $(\rho_\epsilon, v_\epsilon)$ to (\ref{sys})  bounded uniformly in $\mathcal{C}([0,\eta(\delta_0-\delta_1)[,B_{\delta_1})$  with initial conditions $(\rho_\epsilon (0), v_\epsilon (0))$.
Moreover, $\sqrt{\epsilon}\partial_\parallel  V_\epsilon$ is uniformly bounded in $\mathcal{C}([0,\eta(\delta_0-\delta_1)[,B_{\delta_1})$.
\end{thm}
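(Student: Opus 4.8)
The plan is to set system (\ref{sys}) in the analytic scale $\{B_\delta\}_{\delta>1}$ and to apply an abstract Cauchy--Kovalevskaya theorem in the spirit of Caflisch \cite{Caf} and Grenier \cite{Gre1}. First I would record the two structural properties of the norm $|\cdot|_\delta$ of (\ref{norm}): it is an algebra norm, $|fg|_\delta\leq|f|_\delta\,|g|_\delta$ (by the triangle inequality $|k|\leq|k-k'|+|k'|$ together with $\delta>1$), and it loses derivatives in a controlled way, $|\partial_{x_j}f|_{\delta'}\leq\frac{C}{\delta-\delta'}\,|f|_\delta$ for $1<\delta'<\delta$ (since $\sup_m m(\delta'/\delta)^m\lesssim(\delta-\delta')^{-1}$). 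With these at hand, the transport and convection terms $\nabla_\perp(E^\perp_\epsilon\rho_\epsilon)$, $\partial_\parallel(v_\epsilon\rho_\epsilon)$, $v_\epsilon\partial_\parallel v_\epsilon$ all fit the standard one-derivative-loss structure the abstract theorem is designed to absorb through the factor $1/(\delta-\delta')$. The only term that does not fit directly, and which carries the whole difficulty, is the parallel force $-\partial_\parallel V_\epsilon$, which is a priori of size $O(1/\sqrt\epsilon)$.

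Before addressing that term I would dispose of the perpendicular field. Writing $E^\perp_\epsilon=-\nabla^\perp\phi_\epsilon$ and taking Fourier coefficients in $-\epsilon^2\partial_\parallel^2\phi_\epsilon-\Delta_\perp\phi_\epsilon=\rho_\epsilon-\int\rho_\epsilon\,dx_\perp$, one observes that the right-hand side has no Fourier mode with $k_\perp=0$, so on its support $\epsilon^2 k_\parallel^2+|k_\perp|^2\geq|k_\perp|^2\geq1$. Hence
\[
|\mathcal{F}E^\perp_\epsilon(k)|\leq \frac{C\,|k_\perp|}{\epsilon^2 k_\parallel^2+|k_\perp|^2}\,\bigl|\mathcal{F}(\rho_\epsilon-\textstyle\int\rho_\epsilon\,dx_\perp)(k)\bigr|\leq C\,|\mathcal{F}\rho_\epsilon(k)|,
\]
so $\rho_\epsilon\mapsto E^\perp_\epsilon$ is bounded (indeed smoothing in $x_\perp$) on every $B_\delta$, uniformly in $\epsilon$. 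The electric drift therefore poses no $\epsilon$-obstruction and can be treated as part of the nonlinearity.

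The heart of the proof is the plasma-wave decomposition of section \ref{sec-waves}, which tames the singular force. Averaging the equation for $\rho_\epsilon$ over $x_\perp$ annihilates the perpendicular divergence and gives $\partial_t n_\epsilon+\partial_\parallel j_\epsilon=0$, with $n_\epsilon=\int\rho_\epsilon\,dx_\perp-1$ and $j_\epsilon=\int\rho_\epsilon v_\epsilon\,dx_\perp$; averaging the equation for $\rho_\epsilon v_\epsilon$ and inserting $-\epsilon\partial_\parallel^2 V_\epsilon=n_\epsilon$ gives $\partial_t j_\epsilon=-\partial_\parallel V_\epsilon+R_\epsilon$, where $R_\epsilon$ is a product of quantities bounded in the scale. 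Combining the two identities and using $-\partial_\parallel^2 V_\epsilon=n_\epsilon/\epsilon$ yields the forced harmonic oscillator
\[
\partial_t^2 n_\epsilon=-\tfrac{1}{\epsilon}\,n_\epsilon+\partial_\parallel R_\epsilon,
\]
whose frequency $1/\sqrt\epsilon$ is independent of the Fourier mode. Because the time-oscillation enters only through the scalar multipliers $\cos(t/\sqrt\epsilon)$ and $\sqrt\epsilon\,\sin(t/\sqrt\epsilon)$, Duhamel's formula controls $|n_\epsilon(t)|_{\delta'}$ by $|n_\epsilon(0)|_{\delta'}+\sqrt\epsilon\,|\partial_\parallel j_\epsilon(0)|_{\delta'}+\frac{C\sqrt\epsilon}{\delta-\delta'}\int_0^t\sup_\tau|R_\epsilon(\tau)|_{\delta}\,d\tau$. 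The well-preparedness hypothesis gives $|n_\epsilon(0)|_{\delta_0}\leq C\sqrt\epsilon$, while the algebra property bounds $j_\epsilon(0)$ and $R_\epsilon$; hence $n_\epsilon$ stays $O(\sqrt\epsilon)$ on the time interval. Since $\sqrt\epsilon\,\partial_\parallel V_\epsilon=-\partial_\parallel^{-1}(n_\epsilon/\sqrt\epsilon)$ with $\partial_\parallel^{-1}$ smoothing on zero-mean functions, this is exactly the asserted uniform bound on $\sqrt\epsilon\,\partial_\parallel V_\epsilon$, and it rewrites the singular force as a bounded quantity.

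With these three ingredients I would run the approximation scheme: a Picard-type iteration for $(\rho_\epsilon,v_\epsilon)$ coupled to the plasma-wave variables $(n_\epsilon,j_\epsilon)$, estimating each step in the scale through the algebra and derivative-loss bounds for the nonlinear/transport terms, the uniform boundedness of $E^\perp_\epsilon$, and the scalar-multiplier control of the force. As in \cite{Caf,Gre1} this produces a lower bound $\eta(\delta_0-\delta_1)$ on the life span independent of $\epsilon$, a uniform bound in $\mathcal{C}([0,\eta(\delta_0-\delta_1)),B_{\delta_1})$, and uniqueness by a contraction estimate in the same scale. I expect the single genuine obstacle to be the third step: decoupling the $1/\sqrt\epsilon$ oscillation from the analytic estimate. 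A naive Cauchy--Kovalevskaya argument would feed the $O(1/\sqrt\epsilon)$ force into the Duhamel iteration and blow up the life span as $\epsilon\to0$; everything hinges on recognizing the harmonic-oscillator structure of the plasma wave and on the compatibility of the initial data, after which the remaining estimates are the routine Cauchy--Kovalevskaya machinery.
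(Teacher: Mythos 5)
Your overall strategy --- the $B_\delta$ algebra and derivative-loss estimates, the Fourier-symbol bound $\vert \mathcal{F}E^\perp_\epsilon(k)\vert\leq C\vert\mathcal{F}\rho_\epsilon(k)\vert$ using $\vert k_\perp\vert\geq 1$ on the support of $\rho_\epsilon-\int\rho_\epsilon\,dx_\perp$, and the wave equation $\partial_t^2 n_\epsilon+n_\epsilon/\epsilon=\partial_\parallel R_\epsilon$ for $n_\epsilon=\int\rho_\epsilon\,dx_\perp-1$ --- coincides with the paper's. But there is a genuine gap at the point where you claim that proving $\sqrt{\epsilon}\,\partial_\parallel V_\epsilon=O(1)$ ``rewrites the singular force as a bounded quantity.'' It does not: it shows precisely that $\partial_\parallel V_\epsilon=O(1/\sqrt{\epsilon})$, so the force in the momentum equation remains unbounded as $\epsilon\to 0$, and a Picard iteration run directly on $(\rho_\epsilon,v_\epsilon)$ picks up the contribution $\int_0^t\partial_\parallel V_\epsilon\,ds$, which from the amplitude bound alone can only be estimated by $Ct/\sqrt{\epsilon}$. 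This is exactly the ``naive argument'' you say must be avoided, yet your scheme as written does not avoid it.

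What is missing is the paper's central device: one must use the oscillation, not just the amplitude. Setting $E_{\epsilon,\parallel}=-\partial_\parallel V_\epsilon$ and applying Duhamel to the wave equation, the kernel $\frac{1}{ik_\parallel}\bigl[1-\cos\bigl(\frac{t-s}{\sqrt{\epsilon}}\bigr)\bigr]$ shows that the time primitive $G_\epsilon(t)=\int_0^t E_{\epsilon,\parallel}(s)\,ds$ is bounded in the analytic scale uniformly in $\epsilon$: the $1/\sqrt{\epsilon}$ amplitude of $E_{\epsilon,\parallel}$ is exactly compensated by the factor $\sqrt{\epsilon}$ gained by integrating $\cos(s/\sqrt{\epsilon})$ in time, and the initial layer $G^0_\epsilon$ is controlled by the hypothesis $\Vert\int\rho_\epsilon(0)\,dx_\perp-1\Vert_{B_{\delta_0}}\leq C\sqrt{\epsilon}$. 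The iteration must then be run on the filtered unknown $w_\epsilon=v_\epsilon-G_\epsilon$, whose equation contains only bounded terms once $v_\epsilon=w_\epsilon+G_\epsilon$ is substituted into the nonlinearities, coupled to $\rho_\epsilon$, $G_\epsilon$ and $\sqrt{\epsilon}E_{\epsilon,\parallel}$ as separate unknowns each with its own closed estimate. With this filtering the rest of your outline (uniform life span $\eta(\delta_0-\delta_1)$, contraction, uniqueness) goes through as in Caflisch and Grenier; without it, the estimate on $v_\epsilon$ does not close uniformly in $\epsilon$.
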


\begin{rque}
\begin{itemize}

  \item The condition $\left\Vert \int \rho_\epsilon(0) dx_\perp - 1\right\Vert_{B_{\delta_0}} \leq C \sqrt{\epsilon}$ implies that $\sqrt{\epsilon}\partial_\parallel V_\epsilon(0)$ is bounded uniformly in $B_{\delta_0}$ (this is the correct scale in view of the energy conservation).

 \item Note that for all $t\geq0, \int \rho_\epsilon dx=1$. Hence the Poisson equation $ -\epsilon \partial_{\parallel}^2 V_\epsilon =\int \rho_\epsilon dx_\perp -1$ can always be solved.
\end{itemize}
\end{rque}

We can then prove the convergence result:

\begin{thm}
 \label{con}
Let $(\rho_\epsilon, v_\epsilon)$ be solutions to the system (\ref{sys}) for $0\leq t \leq T$ satisfying for some $s>7/2$ the following estimate:
\begin{equation}
 (H): \sup_{t\leq T, \epsilon} \left( \Vert \rho_\epsilon \Vert_{H^s_{x_\perp,x_\parallel}}+\Vert v_\epsilon \Vert_{H^s_{x_\perp,x_\parallel}}+ \Vert \sqrt{\epsilon}\partial_{x_\parallel} V_\epsilon \Vert_{H^s_{x_\parallel}} \right) < +\infty .
\end{equation}
Then we get the following convergences
$$\rho_\epsilon \rightarrow \rho, $$
$$ v_\epsilon - \frac{1}{i}(  E_+ e^{it/\sqrt{\epsilon}} - E_- e^{-it/\sqrt{\epsilon}}) \rightarrow v, $$
strongly respectively in $\mathcal{C}([0,T], H^{s'}_{x_\perp, x_\parallel})$ and $\mathcal{C}([0,T], H^{s'-1}_{x_\perp, x_\parallel})$ for all $s'<s$,
and
$$ \sqrt{\epsilon}\left(-\partial_{x_\parallel} V_\epsilon - ( E_+ e^{it/\sqrt{\epsilon}} + E_- e^{-it/\sqrt{\epsilon}})\right)\rightarrow 0, $$
strongly in $\mathcal{C}([0,T], H^{s'}_{x_\parallel})$ for all $s'<s-1$, and where $(\rho, v)$ is solution to the asymptotic system (\ref{simple}) on $[0,T]$ with initial conditions:
$$\rho(0)=\lim_{\epsilon\rightarrow 0} \rho_\epsilon(0),$$
$$v(0) =\lim_{\epsilon\rightarrow 0}\left( v_\epsilon(0) - \int \rho_\epsilon  v_\epsilon dx_\perp (0)\right)  $$
and $E_+(t,x_\parallel), E_-(t,x_\parallel)$ are gradient correctors which satisfy the transport equations:
$$\partial_t  E_\pm + \left(\int \rho v dx_\perp\right) \partial_{x_\parallel}  E_\pm =0,$$
with initial data:
$$\partial_{x_\parallel} E_ +(0) = \lim_{\epsilon\rightarrow 0} \frac{1}{2} \partial_{x_\parallel} \left(-\sqrt{\epsilon}\partial_{x_\parallel}V_\epsilon(0) + i\int \rho_\epsilon v_\epsilon dx_\perp(0) \right), $$
$$\partial_{x_\parallel} E_ -(0) = \lim_{\epsilon\rightarrow 0}\frac{1}{2} \partial_{x_\parallel}\left(-\sqrt{\epsilon}\partial_{x_\parallel}V_\epsilon(0) - i\int \rho_\epsilon v_\epsilon dx_\perp(0) \right). $$
\end{thm}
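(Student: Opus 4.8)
The plan is to adapt the filtering (fast-oscillation averaging) method of Grenier \cite{Gre1}, itself built on the Cauchy--Kovalevskaya scheme of Caflisch \cite{Caf}, to the anisotropic system (\ref{sys}). The essential new difficulty compared with \cite{Gre1} is that the transverse variable $x_\perp$ is no longer a passive parameter $\Theta$ but carries a genuine transport dynamics through $\nabla_\perp(E^\perp_\epsilon\,\cdot)$ and the nonlocal elliptic coupling. Throughout I would work with the uniform bounds granted by hypothesis $(H)$, so that no fresh a priori estimate has to be produced; the whole content is to isolate the plasma oscillation of frequency $1/\sqrt{\epsilon}$ and to pass to the limit in the surviving slow dynamics.

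First I would average the system over $x_\perp$. Setting $j_\epsilon=\int\rho_\epsilon v_\epsilon\,dx_\perp$ and $G_\epsilon=-\sqrt{\epsilon}\,\partial_{x_\parallel}V_\epsilon$, the transverse divergence $\nabla_\perp(E^\perp_\epsilon\,\cdot)$ disappears under $\int dx_\perp$, and the Poisson equation for $V_\epsilon$ yields the constraint $\int\rho_\epsilon\,dx_\perp-1=\sqrt{\epsilon}\,\partial_{x_\parallel}G_\epsilon$. Combining the averaged continuity equation with the averaged momentum equation, and using that $-\partial_{x_\parallel}V_\epsilon=G_\epsilon/\sqrt{\epsilon}$ is independent of $x_\perp$, I expect to obtain, up to $x_\parallel$-mean terms, a forced harmonic oscillator
\[
\epsilon\,\partial_t^2 G_\epsilon + G_\epsilon = \sqrt{\epsilon}\,R_\epsilon,
\]
with $R_\epsilon$ uniformly bounded by $(H)$. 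This is the quantitative expression of the plasma oscillation and it dictates the ansatz $G_\epsilon\simeq E_+ e^{it/\sqrt{\epsilon}}+E_- e^{-it/\sqrt{\epsilon}}$. I would then define the amplitudes $E_\pm(t,x_\parallel)$ by the prescribed initial data and impose the non-secularity (solvability) condition on the expansion; the slow modulation solves $\partial_t E_\pm+\bigl(\int\rho v\,dx_\perp\bigr)\partial_{x_\parallel}E_\pm=0$, the advection speed being the mean current, which is $x_\parallel$-independent by the ``incompressibility in average'' identity $\partial_{x_\parallel}\int\rho v\,dx_\perp=0$ derived in the introduction.

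Next I would introduce the filtered unknowns $u_\epsilon=v_\epsilon-\tfrac1i\bigl(E_+ e^{it/\sqrt{\epsilon}}-E_- e^{-it/\sqrt{\epsilon}}\bigr)$ and $g_\epsilon=G_\epsilon-\bigl(E_+ e^{it/\sqrt{\epsilon}}+E_- e^{-it/\sqrt{\epsilon}}\bigr)$. The point of this subtraction is that the singular $1/\sqrt{\epsilon}$ forcing $-\partial_{x_\parallel}V_\epsilon$ in the equation for $v_\epsilon$ is exactly the time derivative of the oscillatory corrector, so that $\partial_t u_\epsilon$ and $\partial_t g_\epsilon$ become uniformly bounded in a lower Sobolev norm. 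Note that $\rho_\epsilon$ itself carries only an $O(\sqrt{\epsilon})$ oscillation, since its source $\partial_{x_\parallel}(v_\epsilon\rho_\epsilon)$ oscillates at $O(1)$ and integrates in time to $O(\sqrt{\epsilon})$, so no filtering of $\rho_\epsilon$ is required. Then, by Aubin--Lions (uniform spatial $H^s$ bounds from $(H)$ together with the uniform bound on the time derivatives of the filtered variables) and interpolation, I would obtain the strong convergences $\rho_\epsilon\to\rho$ and $u_\epsilon\to v$ in $\mathcal{C}([0,T],H^{s'})$ for $s'<s$, and $g_\epsilon\to0$, the latter being precisely the announced statement for $\sqrt{\epsilon}\,(-\partial_{x_\parallel}V_\epsilon)$. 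Passing to the limit, the constraint above forces $\int\rho\,dx_\perp=1$; and because $-\partial_{x_\parallel}V_\epsilon$ is a function of $x_\parallel$ alone, its non-oscillatory limit is a function of $x_\parallel$ alone, namely $-\partial_{x_\parallel}p(t,x_\parallel)$, recovered as the Lagrange multiplier of the average-incompressibility constraint exactly as $E$ is in \cite{Gre1}, with the one-dimensional elliptic law of the introduction.

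The main obstacle I anticipate lies in controlling the transverse dynamics that is absent from \cite{Gre1}. One must pass to the limit in the quadratic transport terms $\nabla_\perp(E^\perp_\epsilon\rho_\epsilon)$ and $\nabla_\perp(E^\perp_\epsilon v_\epsilon)$, where the field $E^\perp_\epsilon=-\nabla^\perp\phi_\epsilon$ is nonlocal: this forces the regularity threshold $s>7/2$, so that $H^s(\mathbb{T}^3)$ is a stable algebra for the nonlinearities and so that $E^\perp_\epsilon$ is controlled, and it requires showing that the regularized operator $(-\epsilon^2\partial_{x_\parallel}^2-\Delta_\perp)^{-1}$ converges on the mean-free part to $-\Delta_\perp^{-1}$, the $\epsilon^2\partial_{x_\parallel}^2$ correction being negligible, so that $E^\perp_\epsilon\to E^\perp=\nabla^\perp\Delta_\perp^{-1}\bigl(\rho-\int\rho\,dx_\perp\bigr)$. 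The delicate accounting is to verify that the fast oscillations carried by $u_\epsilon$, $g_\epsilon$ do not create spurious resonances in these $x_\perp$-quadratic terms: the non-resonant contributions (carrying $e^{\pm 2it/\sqrt{\epsilon}}$) must be shown to disappear in the limit, while the only admissible resonance is the designed one that feeds the pressure through $\int\rho v^2\,dx_\perp$, consistently with the identity $-\partial_{x_\parallel}p=\partial_{x_\parallel}\int\rho v^2\,dx_\perp$.
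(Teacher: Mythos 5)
Your overall strategy is the paper's: isolate the plasma oscillation through the wave equation for the parallel field, filter the velocity, get strong compactness by Aubin--Lions, recover the pressure as the weak limit of the mean-free (in $x_\parallel$) residual, and read off the transport equations for $E_\pm$ from the wave equation. But there is a genuine gap in the way you organize the filtering. You propose to subtract from $v_\epsilon$ the \emph{limiting} oscillatory profile $\tfrac1i\bigl(E_+e^{it/\sqrt{\epsilon}}-E_-e^{-it/\sqrt{\epsilon}}\bigr)$, with $E_\pm$ constructed beforehand by a WKB/non-secularity argument. This runs into two problems. First, a circularity: the amplitudes $E_\pm$ solve $\partial_t E_\pm+\bigl(\int\rho v\,dx_\perp\bigr)\partial_{x_\parallel}E_\pm=0$, whose coefficient is the limit you have not yet extracted, so they cannot be ``prescribed'' at the start. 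Second, and more seriously, the cancellation of the singular forcing is then only approximate: $\partial_t$ of your profile produces $\tfrac{1}{\sqrt{\epsilon}}\bigl(E_+e^{it/\sqrt{\epsilon}}+E_-e^{-it/\sqrt{\epsilon}}\bigr)$, while the actual forcing is $E_{\epsilon,\parallel}$, and the discrepancy is $\tfrac{1}{\sqrt{\epsilon}}\bigl(\sqrt{\epsilon}E^1_\epsilon-E_+e^{it/\sqrt{\epsilon}}-E_-e^{-it/\sqrt{\epsilon}}\bigr)+E^2_\epsilon$. The convergence $\sqrt{\epsilon}E^1_\epsilon\to E_+e^{it/\sqrt{\epsilon}}+E_-e^{-it/\sqrt{\epsilon}}$ comes with no rate, so this discrepancy is $o(1)/\sqrt{\epsilon}$, which is not bounded; the uniform bound on $\partial_t u_\epsilon$ that you need for Aubin--Lions is therefore not available with your choice of filter.

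The paper avoids both issues by filtering with an $\epsilon$-dependent object defined directly from the solution: $E_{\epsilon,\parallel}$ is split as $E^1_\epsilon+E^2_\epsilon$ by a running time average over windows of length $2\pi\sqrt{\epsilon}$ (Lemma \ref{osci}), and one sets $w_\epsilon=v_\epsilon-W_\epsilon$ with $\partial_t W_\epsilon=E^1_\epsilon$ \emph{exactly}, so the singular term cancels identically and $\partial_t w_\epsilon$ is bounded in $H^{s-2}$. Compactness then gives $w_\epsilon\to v$ and $\rho_\epsilon\to\rho$; the quadratic resonance $W_\epsilon\partial_{x_\parallel}W_\epsilon$ together with $E^2_\epsilon$ converges weakly to a limit $F$ with $\int F\,dx_\parallel=0$, hence $F=-\partial_{x_\parallel}p$ (this is where the pressure comes from, not from an a priori non-secularity condition). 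Only at the very end (Lemma \ref{osci2}) is $W_\epsilon$ identified with $\tfrac1i\bigl(E_+e^{it/\sqrt{\epsilon}}-E_-e^{-it/\sqrt{\epsilon}}\bigr)$ and are the transport equations for $E_\pm$ derived from the wave equation (\ref{waves}), now that $\int\rho v\,dx_\perp$ exists. Your treatment of the transverse terms (uniform bound on $E^\perp_\epsilon$ from the Poisson symbol, passage to $-\Delta_\perp^{-1}$ on mean-free data, strong-times-weak limits) matches the paper and is fine; it is the order of operations in the filtering step that must be reversed for the argument to close.
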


As explained in the introduction, due to the two-streams instabilities, we have to restrict to data with analytic regularity: the Sobolev version of these results is false in general (see \cite{CGG} and the discussion of Section \ref{sec-sharp}). 

\begin{rque}
 \begin{itemize}
  \item It is clear that solutions built in Theorem \ref{exi} satisfy $(H)$.
  \item If instead of $(H)$ we make the stronger assumption, for $\delta>1$
\begin{equation}
 (H'): \sup_{t\leq T, \epsilon} \left( \Vert \rho_\epsilon \Vert_{B_\delta}+\Vert v_\epsilon \Vert_{B_\delta}+ \Vert \sqrt{\epsilon}\partial_{x_\parallel} V_\epsilon \Vert_{B_\delta} \right) < +\infty,
\end{equation}
then we get the same strong convergences in $\mathcal{C}([0,T], B_{\delta'})$ for all $\delta'<\delta$.

Using Lemma \ref{lemma} $(ii), (iv)$, the proof under assumption $(H')$ is the same as under assumption $(H)$. 

\item The ``well-prepared'' case corresponds to the case when:
$$\lim_{\epsilon\rightarrow 0} -\sqrt{\epsilon}\partial^2_{x_\parallel}V_\epsilon(0) =0,$$
$$\lim_{\epsilon\rightarrow 0} \partial_{x_\parallel}\int \rho_\epsilon v_\epsilon dx_\perp(0)=0. $$
Then there is no corrector.
\end{itemize}

\end{rque}

With the same method used for Theorem \ref{exi}, we can also prove a theorem of existence and uniqueness of analytic solutions to system (\ref{simple}). 
\begin{prop}
\label{exi2}
 Let $\delta_0>\delta_1>1$. For initial data $\rho(0), v(0) \in B_{\delta_0}$ satisfying
 \begin{equation}
\rho(0) \geq 0,
\end{equation}
\begin{equation}
\int \rho(0) dx_\perp=1
\end{equation}
and 
\begin{equation}
\partial_\parallel \int  \rho(0)v(0) dx_\perp=0,
\end{equation}
there exists $\eta>0$ depending on $\delta_0$ and on the initial conditions only such that there is a  unique strong solution $(\rho, v_\parallel, p)$ to the system (\ref{simple}) with  $\rho,v \in \mathcal{C}([0,\eta(\delta_0-\delta_1)[,B_{\delta})$ for all $\delta<\delta_1$.
\end{prop}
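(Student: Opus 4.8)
The plan is to recast (\ref{simple}) as an abstract evolution equation in the analytic scale $\{B_\delta\}$ and to run the Cauchy-Kovalevskaya machinery of Caflisch \cite{Caf} and Grenier \cite{Gre1} exactly as in the proof of Theorem \ref{exi}, the present situation being in fact simpler since the singular $\epsilon$-dependent terms are now absent. The genuinely new point, specific to this anisotropic limit, is the propagation of the average constraint $\int\rho\,dx_\perp=1$, and it is there that the hypothesis $\partial_\parallel\int\rho(0)v(0)\,dx_\perp=0$ will be used.

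First I would eliminate the pressure and close the system. Observe that $\nabla_\perp\cdot E^\perp=\nabla_\perp\cdot\nabla^\perp\Delta_\perp^{-1}(\cdots)=0$, so the field $E^\perp=\nabla^\perp\Delta_\perp^{-1}(\rho-\int\rho\,dx_\perp)$ is a bounded (indeed $x_\perp$-smoothing), $\rho$-linear operator on $B_\delta$, while the pressure is given explicitly by $-\partial_\parallel p=\partial_\parallel\int\rho v^2\,dx_\perp$. Substituting both, (\ref{simple}) becomes a closed system $\partial_t(\rho,v)=F(\rho,v)$ whose right-hand side is a polynomial (quadratic in the continuity equation, cubic through $\rho v^2$) built from $\rho,v$ by the bounded operators $\nabla^\perp\Delta_\perp^{-1}$ and the averaging $\int\cdot\,dx_\perp$, together with the first-order operators $\nabla_\perp,\partial_\parallel$.

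Next I would verify the abstract Cauchy-Kovalevskaya estimates in the scale $\{B_\delta\}$. By Lemma \ref{lemma}, $|\cdot|_\delta$ is a Banach algebra, both $\nabla^\perp\Delta_\perp^{-1}$ and $\int\cdot\,dx_\perp$ are bounded on $B_\delta$, and differentiation satisfies $|\partial f|_{\delta'}\le\frac{C}{\delta-\delta'}|f|_\delta$ for $\delta'<\delta$. Combining these gives, for $|u|_\delta,|w|_\delta\le R$,
\[
|F(u)|_{\delta'}\le\frac{C(R)}{\delta-\delta'},\qquad |F(u)-F(w)|_{\delta'}\le\frac{C(R)}{\delta-\delta'}\,|u-w|_\delta,
\]
with $C(R)$ polynomial in $R$ and $F(0)=0$. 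The abstract theorem then yields a unique $(\rho,v)\in\mathcal{C}([0,\eta(\delta_0-\delta_1)[,B_\delta)$ for every $\delta<\delta_1$, with $\eta$ depending only on $\delta_0$ and on $|\rho(0)|_{\delta_0}+|v(0)|_{\delta_0}$.

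It remains to check that this solution actually solves (\ref{simple}), i.e. that the two sign and average constraints propagate; this is the main obstacle. Positivity $\rho\ge0$ follows from the transport structure: since $\nabla_\perp\cdot E^\perp=0$, the density solves $\partial_t\rho+E^\perp\cdot\nabla_\perp\rho+v\partial_\parallel\rho=-\rho\,\partial_\parallel v$, so it is transported by the analytic (hence Lipschitz) flow of $(E^\perp,v)$ and only multiplied by $\exp(-\int\partial_\parallel v)$ along characteristics, preserving its sign. For the average constraint, set $m=\int\rho\,dx_\perp-1$ and $J=\int\rho v\,dx_\perp$; integrating the continuity equation and the equation for $\rho v$ over $x_\perp$ and inserting the pressure law, a direct computation gives the closed linear system
\[
\partial_t m=-\partial_\parallel J,\qquad \partial_t J=\Big(\partial_\parallel\!\int\rho v^2\,dx_\perp\Big)\,m.
\]
The hypotheses give $m(0)=0$ and $\partial_\parallel J(0)=0$, so $(m,J)\equiv(0,J(0))$ with $J(0)$ independent of $x_\parallel$ solves this system; uniqueness for the linear problem (its coefficient is analytic in $x_\parallel$) forces $m\equiv0$, i.e. $\int\rho\,dx_\perp\equiv1$, after which $p$ is recovered from the formula. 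The hardest part is precisely this consistency step: one must confirm that substituting the explicit pressure law is compatible with the hidden incompressibility-in-average constraint, and it is the initial condition $\partial_\parallel\int\rho(0)v(0)\,dx_\perp=0$ that makes the defect $(m,J)$ start at an equilibrium of the linear system and hence remain there.
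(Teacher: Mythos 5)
Your proposal follows essentially the same route as the paper: the pressure is eliminated via the explicit law $-\partial_\parallel p=\partial_\parallel\int\rho v^2\,dx_\perp$, and the Caflisch--Grenier Cauchy--Kovalevskaya iteration of Theorem \ref{exi} is rerun in the $B_\delta$ scale, the argument being simpler since the oscillatory $\epsilon$-terms are absent. Your additional verification that the constraints $\rho\ge 0$ and $\int\rho\,dx_\perp=1$ propagate --- via the closed linear system $\partial_t m=-\partial_\parallel J$, $\partial_t J=\bigl(\partial_\parallel\int\rho v^2\,dx_\perp\bigr)m$ together with uniqueness in the analytic class --- is correct and supplies a consistency step that the paper leaves implicit.
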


\section{Proof of Theorem \ref{exi}}
\label{sec-proof1}
\subsection{Functional analysis on $B_\delta$ spaces}
\label{sec-func}
First we define the time dependent analytic spaces we will work with.

Let $\beta$ be an arbitrary constant in $]0,1[$ (take for instance $\beta=1/2$ to fix ideas) and $\eta>0$ a parameter to be chosen later.

\begin{deft}Let $\delta_0>1$. We define the space
$B_{\delta_0}^\eta=\{u \in \mathcal{C}^0 ([0, \eta(\delta_0-1)], B_{\delta_0-t/\eta}) \}$,
endowed with the norm
 $$\Vert u \Vert_{\delta_0} = \sup_{\left\{
    \begin{array}{ll}
1<\delta\leq \delta_0 \\
0\leq t\leq \eta(\delta_0-\delta)
\end{array}\right.} \left( \vert u(t)\vert_\delta + (\delta_0-\delta- \frac{t}{\eta})^\beta\vert \nabla u(t)\vert_\delta)\right),  $$
where the norm $\vert u\vert_\delta$ was defined in (\ref{norm}):
\[
 \vert u \vert_\delta= \sum_{k \in \mathbb{Z}^3} \vert \mathcal{F} u (k) \vert \delta^{\vert k \vert} < +\infty,
\]
\end{deft}

We now gather from \cite{Gre1} a few elementary properties of these spaces, that we recall for the reader's convenience.
\begin{lem}
\label{lemma}
 \begin{enumerate}[(i)]For all $\delta>1$
  \item The spaces $B_\delta$ and $B_\delta^\eta$ are Banach algebra.
\item If $\delta'<\delta$ then $B_{\delta} \subset B_{\delta'}$, the embedding being continuous and compact.
\item For all $s \in \mathbb{R}$, $B_\delta \subset H^s$, the embedding being continuous and compact.
\item  For all $1<\delta'<\delta$, if $\phi \in B_\delta$,
$$\vert\nabla \phi\vert_{\delta'} \leq \frac{\delta}{\delta-\delta'} \vert\phi\vert_\delta.$$
\item If $u$ is in $B_{\delta_0} ^\eta$ and if $\delta+t/\eta<\delta_0$ then
$$\vert \partial^2_{x_i, x_j} u(t)\vert_\delta  \leq 2^{1+\beta}\Vert u\Vert_{\delta_0} \delta_0 (\delta_0-\delta- \frac{t}{\eta})^{-\beta-1}.$$
 \end{enumerate}
\end{lem}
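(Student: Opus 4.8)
The plan is to treat the five assertions in turn, all of them flowing from the single observation that $|\cdot|_\delta$ is the weighted $\ell^1$ norm of the Fourier coefficients with weight $\delta^{|k|}$, and that this weight is submultiplicative precisely because $\delta>1$ forces $\delta^{|k|}\le\delta^{|j|}\delta^{|k-j|}$ whenever $|k|\le|j|+|k-j|$. For (i), I would start from the convolution identity $\mathcal{F}(uv)(k)=\sum_j \mathcal{F}u(j)\,\mathcal{F}v(k-j)$ and estimate $|uv|_\delta\le\sum_k\sum_j|\mathcal{F}u(j)|\,|\mathcal{F}v(k-j)|\,\delta^{|k|}$; bounding $\delta^{|k|}\le\delta^{|j|}\delta^{|k-j|}$ and summing the two variables separately yields $|uv|_\delta\le|u|_\delta|v|_\delta$. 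Completeness of $B_\delta$ is just completeness of a weighted $\ell^1$ sequence space. For $B_\delta^\eta$ I would control the gradient-correction term in the norm by Leibniz, $|\nabla(uv)|_\delta\le|u|_\delta|\nabla v|_\delta+|v|_\delta|\nabla u|_\delta$, and absorb the time weight $(\delta_0-\delta-t/\eta)^\beta$ into one factor of each summand, obtaining $\|uv\|_{\delta_0}\le C\|u\|_{\delta_0}\|v\|_{\delta_0}$.

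For (ii) and (iii), continuity is immediate: $(\delta')^{|k|}\le\delta^{|k|}$ gives $|u|_{\delta'}\le|u|_\delta$, while $\delta>1$ makes the exponential weight $\delta^{|k|}$ dominate any polynomial weight $(1+|k|^2)^{s/2}$, whence $\|u\|_{H^s}\le C_{s,\delta}\,|u|_\delta$ after a Cauchy--Schwarz against the geometrically decaying sequence $\delta^{-|k|}$. For compactness I would invoke the uniform tail bound on the unit ball of $B_\delta$, namely $\sum_{|k|>N}|\mathcal{F}u(k)|\,(\delta')^{|k|}\le(\delta'/\delta)^N|u|_\delta$: the frequency truncations then approximate the ball uniformly, so it is totally bounded in $B_{\delta'}$ (and likewise in $H^s$), hence precompact.

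The two differentiation bounds are the crux. For (iv) I would note that $\mathcal{F}(\partial_{x_i}u)(k)$ equals $k_i$ (up to the Fourier normalization) times $\mathcal{F}u(k)$, so the claim reduces to the scalar inequality $\sup_{n\ge0} n\,(\delta'/\delta)^n\le\delta/(\delta-\delta')$, applied coefficientwise; writing $r=\delta'/\delta\in(0,1)$ this reads $n r^n\le 1/(1-r)$, which follows from elementary calculus together with $\ln(1/r)\ge1-r$. Then (v) is obtained by applying (iv) twice across an optimally balanced intermediate radius: setting $a=\delta_0-\delta-t/\eta>0$ and $\delta''=\delta+a/2$, both gaps $\delta''-\delta$ and $\delta_0-\delta''-t/\eta$ equal $a/2$. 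Part (iv) gives $|\partial^2_{x_i,x_j}u|_\delta\le\frac{\delta''}{\delta''-\delta}|\nabla u|_{\delta''}$, while the definition of $\|\cdot\|_{\delta_0}$ bounds $|\nabla u(t)|_{\delta''}\le(a/2)^{-\beta}\|u\|_{\delta_0}$ (legitimate since $1<\delta''\le\delta_0$ and $t\le\eta(\delta_0-\delta'')$); using $\delta''\le\delta_0$ and multiplying the two factors reproduces exactly the constant $2^{1+\beta}$ and the exponent $-\beta-1$.

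I expect the genuine work to sit in (v), where the split must be balanced precisely to recover the sharp power $(\delta_0-\delta-t/\eta)^{-\beta-1}$ rather than a weaker exponent, and in the verification that $B_\delta^\eta$ is an algebra, where the time-dependent weight has to be threaded through the Leibniz rule by means of (iv). Compactness in (ii)--(iii) is the only point requiring more than a termwise inequality, and it is dispatched by the standard truncation argument indicated above.
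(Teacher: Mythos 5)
Your proof is correct and takes essentially the same route as the paper: the paper only details (ii) (compactness via the tail bound $\vert(\mathrm{Id}-i_N)\phi\vert_{\delta'}\leq(\delta'/\delta)^N\vert\phi\vert_\delta$ on frequency truncations) and (v) (the same balanced intermediate radius $\delta''=\delta+\tfrac{1}{2}(\delta_0-\delta-t/\eta)$ fed into (iv)), delegating (i), (iii), (iv) to Grenier, and your arguments for those are the standard ones (submultiplicativity of the weight $\delta^{\vert k\vert}$, domination of polynomial weights, and the scalar bound $n r^n\leq 1/(1-r)$). No gaps.
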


For further properties of these spaces we refer to the recent work of Mouhot and Villani \cite{MV}, in which similar analytic spaces (and more sophisticated versions) are considered.

\begin{proof}
 We give an elementary proof for $(ii)$ which is not given in \cite{Gre1} .
The embedding is obvious. We consider for $N\in \mathbb{N}$ the map $i_N$ defined by:
$$i_N(\phi)= \sum_{\vert k \vert \leq N}\mathcal{F}\phi(k)e^{i xk}. $$
We then compute:
$$ \vert (Id - i_N) \phi \vert_{\delta'} = \sum_{\vert k \vert > N}\vert \mathcal{F}\phi(k)\vert \delta'^{\vert k \vert} 
\leq  \left(\frac{\delta'}{\delta}\right)^N \sum_{\vert k \vert > N}\vert \mathcal{F}\phi(k)\vert \delta^{\vert k \vert} 
\leq  \left(\frac{\delta'}{\delta}\right)^N \vert \phi \vert_\delta.$$

So the embedding $B_{\delta} \subset B_{\delta'}$ is compact as the limit of finite rank operators.

For $(v)$, take $\delta'=\delta+\frac{\delta_0-\delta-t/\eta}{2}$ and apply $(iv)$. We refer to  \cite{Gre1} for the other proofs.
\end{proof}

We will also need the following elementary observation:

\begin{rque}
 \label{rque}
Let $\phi \in B_\delta$. Then:
$$\left\vert \int \phi dx_\perp \right\vert_\delta \leq \vert \phi \vert_\delta.$$
\end{rque}

\begin{proof}We simply compute:
 $$\left\vert \int \phi dx_\perp \right\vert_\delta= \sum_{k_\perp=0, k_\parallel \in \mathbb{N}} \vert \mathcal{F}(\phi)(k_\perp,k_\parallel) \vert \delta^{\vert k\vert} \leq \sum_ {k \in \mathbb{N}^3} \vert \mathcal{F}(\phi)\vert \delta^{\vert k\vert}=\vert \phi \vert_\delta.$$
\end{proof}

\subsection{Description of plasma oscillations}
\label{sec-waves}

To simplify notations, we set $E_{\epsilon,\parallel} = -\partial_{x_\parallel} V_\epsilon(t,x_\parallel)$ (which has nothing to do with $E_\epsilon^\perp$). In this paragraph, we want to understand the oscillatory behaviour of $E_{\epsilon,\parallel}$. We will see that the dynamics in $x_\perp$ does not interfer too much with the equations on $E_{\epsilon,\parallel}$, so that we get almost the same description of oscillations as in Grenier's paper \cite{Gre1}.

First we differentiate twice with respect to time the Poisson equation satisfied by $V_\epsilon$:

\begin{equation}
\label{eq-deriv}
 \epsilon \partial^2_t \partial_{x_\parallel} E_{\epsilon,\parallel} = \partial^2_t \int \rho_\epsilon dx_\perp.
\end{equation}

We use the equation on $\rho_\epsilon$ to compute the right hand side of (\ref{eq-deriv}).

\begin{equation}
 \partial_t \int\rho_\epsilon dx_\perp = \underbrace{- \int \nabla_\perp( E^\perp_\epsilon \rho_\epsilon)dx_\perp}_{=0} - \partial_{x_\parallel} \int \rho_\epsilon v_\epsilon dx_\perp.
\end{equation}
Then we integrate with respect to $x_\perp$ the equation satisfied by $\rho_\epsilon v_\epsilon$, that is:

\[
\partial_t (\rho_\epsilon v_\epsilon) + \nabla_\perp(E_\epsilon^\perp \rho_\epsilon v_\epsilon) + \partial_{x_\parallel}(v_\epsilon^2 \rho_\epsilon) = -\rho_\epsilon(\epsilon\partial_{x_\parallel} \phi_\epsilon(t,x) +\partial_{x_\parallel} V_\epsilon(t,x_\parallel))
\]

and we get:

\begin{equation}
-\partial_t \int \rho_\epsilon v_\epsilon dx_\perp = \partial_{x_\parallel} \int \rho_\epsilon v_\epsilon^2 dx_\perp + E_{\epsilon,\parallel}\int\rho_\epsilon dx_\perp- \int \rho_\epsilon (\epsilon \partial_{x_\parallel} \phi_\epsilon) dx_\perp,
\end{equation}
so that:
$$\partial_t^2 \int \rho_\epsilon dx_\perp = \partial^2_{x_\parallel} \int \rho_\epsilon v_\epsilon^2 dx_\perp - \partial_{x_\parallel}(E_{\epsilon,\parallel} \int \rho_\epsilon dx_\perp)+ \partial_{x_\parallel}\int \rho_\epsilon (\epsilon \partial_{x_\parallel} \phi_\epsilon) dx_\perp.$$
Thus it comes:

\begin{equation}
\label{waves}
 \epsilon \partial_t^2 \partial_{x_\parallel}E_{\epsilon,\parallel} + \partial_{x_\parallel}E_{\epsilon,\parallel} = \partial^2_{x_\parallel} \int \rho_\epsilon v_\epsilon^2 dx_\perp + \epsilon \partial_{x_\parallel}[E_{\epsilon,\parallel} \partial_{x_\parallel}E_{\epsilon,\parallel} ]-\partial_{x_\parallel}\int \rho_\epsilon (\epsilon \partial_{x_\parallel} \phi_\epsilon) dx_\perp.
\end{equation}

Equation (\ref{waves}) is the wave equation allowing to describe the essential oscillations. At least formally, this equation indicates that there are time oscillations with frequency $\frac{1}{\sqrt{\epsilon}}$ and magnitude $\frac{1}{\sqrt{\epsilon}}$ created by the right-hand side of the equation which acts like a source. We observe here that the source is expected to be of order $\mathcal{O}(1)$: indeed, by assumption on the data at $t=0$, we can check that this quantity is bounded in a $B_\delta$ space. 

In particular if we want to prove strong convergence results we will have to introduce non-trivial correctors in order to get rid of these oscillations. We notice also that (\ref{waves}) is very similar to the wave equation obtained in \cite{Gre1} (the only difference is a new term in the source), so that most of the calculations and estimates on $E_{\epsilon,\parallel}$ we will need are done in \cite{Gre1}. 

\subsection{A priori estimates}

We have just observed that $E_{\epsilon,\parallel}$ roughly behaves like $\frac{1}{\sqrt{\epsilon}} e^{\pm it/\sqrt{\epsilon}}$. Hence if we consider the average in time 
\begin{equation}
 G_\epsilon = \int_0^t E_{\epsilon,\parallel}(s,x_\parallel) ds,
\end{equation}
we expect that $G_\epsilon$ is bounded uniformly with respect to $\epsilon$ in some functional space.
We also introduce the translated current (which corresponds to some filtering of the time oscillations created by the electric field):
\begin{equation}
 w_\epsilon =v_\epsilon - G_\epsilon,
\end{equation}
so that system (\ref{sys}) now writes:
\begin{equation}
 \left\{
\begin{array}{ll}
  \partial_t \rho_\epsilon + \nabla_\perp (E^\perp_\epsilon \rho_\epsilon) + \partial_\parallel((w_{\epsilon}+G_\epsilon) \rho_\epsilon)= 0 \\
  \partial_t w_{\epsilon} + \nabla_\perp (E^\perp_\epsilon (w_{\epsilon}+G_\epsilon)) + (w_{\epsilon} +G_\epsilon)\partial_\parallel(v_{\epsilon}+G_\epsilon)) = -\epsilon\partial_\parallel \phi_\epsilon(t,x_\parallel).
\end{array}
\right.
\end{equation}

The goal is now to prove some a priori estimates for $G_\epsilon, \rho_\epsilon$ and $w_\epsilon$. We are also able to get similar estimates on $E_\epsilon^\perp$ and $\epsilon \partial_{x_\parallel} \phi_\epsilon$, thanks to the Poisson equation satisfied by $\phi_\epsilon$.

\subsubsection{Estimate on $G_\epsilon$ and $\sqrt{\epsilon} E_{\epsilon,\parallel}$}

We use Duhamel's formula for the wave equation (\ref{waves}) to get the following identity:
\begin{equation}
 \mathcal{F}_\parallel G_\epsilon(t,k_\parallel) = \int_0^t \left(\frac{1}{ik_\parallel} \left[1- \cos(\frac{t-s}{\sqrt{\epsilon}})\right] \mathcal{F}_\parallel g_\epsilon(s,k_\parallel)\right) ds + \mathcal{F}_\parallel G_\epsilon^0,
\end{equation}
denoting by $\mathcal{F}_\parallel$ the Fourier transform with respect to the parallel variable only and $k_\parallel$ the Fourier variable and where:
$$g_\epsilon=\partial^2_{x_\parallel} \int \rho_\epsilon v_\epsilon^2 dx_\perp + \epsilon \partial_{x_\parallel}[E_{\epsilon,\parallel} \partial_{x_\parallel}E_{\epsilon,\parallel} ]-\partial_{x_\parallel}\int \rho_\epsilon (\epsilon \partial_{x_\parallel} \phi_\epsilon) dx_\perp,$$
\begin{equation}
 \label{G0}
G_\epsilon^0 = \sqrt{\epsilon}E_{\epsilon,\parallel}(0,x_\parallel) \sin \left(\frac{s}{\sqrt{\epsilon}}\right)-\epsilon \partial_t E_{\epsilon,\parallel}(0,x_\parallel) \left(\cos\left(\frac{s}{\sqrt{\epsilon}}\right)-1\right).
\end{equation}

We now estimate $\Vert G_\epsilon \Vert_{\delta_0}$.

\begin{eqnarray*}
 \vert G_\epsilon \vert_\delta &\leq&  \int_0^t \left\vert\mathcal{F}_\parallel^{-1}\left(\frac{1}{ik_\parallel} [1- \cos(\frac{t-s}{\sqrt{\epsilon}})] \mathcal{F}_\parallel g_\epsilon(s,k_\parallel)\right)\right\vert_\delta ds +  \vert G_\epsilon^0 \vert_\delta.
\end{eqnarray*}

$$\frac{1}{ik_\parallel}\mathcal{F}_\parallel g_\epsilon=\mathcal{F}_\parallel\left(\partial_{x_\parallel} \int \rho_\epsilon v_\epsilon^2 dx_\perp\right) + \epsilon \mathcal{F}_\parallel\left(E_{\epsilon,\parallel}\partial_{x_\parallel}E_{\epsilon,\parallel}\right).$$ 

Thanks to Remark \ref{rque} and Lemma \ref{lemma} , $(i)$:

\begin{equation}
 \left\vert  \int \partial_{x_\parallel}(\rho_\epsilon v_\epsilon^2 )dx_\perp \right\vert_\delta \leq \left\vert \partial_{x_\parallel}(\rho_\epsilon v_\epsilon^2) \right\vert_\delta \leq
(\delta_0-\delta- \frac{s}{\eta})^{-\beta}\Vert \rho_\epsilon \Vert_{\delta _0}\Vert v_\epsilon \Vert_{\delta _0}^2.
\end{equation}

Similarly, we prove:
$$\epsilon \left\vert E_{\epsilon,\parallel}\partial_{x_\parallel}E_{\epsilon,\parallel} \right\vert_\delta \leq \frac 1 2 (\delta_0-\delta- \frac{s}{\eta})^{-\beta} \Vert \sqrt{\epsilon} E_{\epsilon,\parallel}\Vert_{\delta _0}^2,$$

$$
\left\vert \int \partial_{x_\parallel}\left(\rho_\epsilon (\epsilon \partial_{x_\parallel} \phi_\epsilon)\right) dx_\perp \right\vert_\delta \leq (\delta_0-\delta- \frac{s}{\eta})^{-\beta}\Vert \rho_\epsilon \Vert_{\delta _0}\Vert \epsilon \partial_{x_\parallel} \phi_\epsilon \Vert_{\delta _0}.
$$

Thus, we have:
\begin{eqnarray*}
  \vert  G_\epsilon \vert_\delta \leq C \int_0^t (\delta_0-\delta- \frac{s}{\eta})^{(-\beta)}(\Vert \rho_\epsilon \Vert_{\delta _0} \Vert v_\epsilon \Vert_{\delta _0}^2 +  \Vert \sqrt{\epsilon} E_{\epsilon,\parallel}\Vert_{\delta _0}^2+\Vert \rho_\epsilon \Vert_{\delta _0}\Vert \epsilon \partial_{x_\parallel} \phi_\epsilon \Vert_{\delta _0}) +   \vert  G_\epsilon^0 \vert_\delta.
\end{eqnarray*}

Likewise, one can show (this time we use lemma \ref{lemma}, $(v)$):
\begin{eqnarray*}
  \vert \partial_{x_\parallel} G_\epsilon \vert_\delta \leq C \int_0^t (\delta_0-\delta- \frac{s}{\eta})^{(-\beta-1)}(\Vert \rho_\epsilon \Vert_{\delta _0} \Vert v_\epsilon \Vert_{\delta _0}^2 +  \Vert \sqrt{\epsilon} E_{\epsilon,\parallel}\Vert_{\delta _0}^2+\Vert \rho_\epsilon \Vert_{\delta _0}\Vert \epsilon \partial_{x_\parallel} \phi_\epsilon \Vert_{\delta _0}) +   \vert \partial_{x_\parallel} G_\epsilon^0 \vert_\delta.
\end{eqnarray*}
Hence using the elementary estimates
$$\int_0^t \frac{ds}{(\delta_0-\delta - \frac{\sigma}{\eta})^\beta}\leq \eta \frac{2}{1-\beta}\delta_0^{1-\beta} ,$$
$$\int_0^t \frac{ds}{(\delta_0-\delta - \frac{\sigma}{\eta})^{\beta+1}}  \leq \frac{2\eta}{\beta}(\delta_0-\delta - \frac{t}{\eta})^{-\beta}, $$
and $\Vert v_\epsilon \Vert_{\delta_0} \leq \Vert w_\epsilon \Vert_{\delta_0}  +\Vert G_\epsilon \Vert_{\delta_0}$, we get:
\begin{equation}
 \Vert G_\epsilon \Vert_{\delta_0} \leq \eta C(\delta_0,\beta) \left((\Vert w_\epsilon \Vert_{\delta_0}+ \Vert G_\epsilon \Vert_{\delta_0})^2\Vert \rho_\epsilon \Vert_{\delta_0}+ \Vert \sqrt{\epsilon}E_{\epsilon,\parallel} \Vert_{\delta_0}^2+\Vert \rho_\epsilon \Vert_{\delta _0}\Vert \epsilon \partial_{x_\parallel} \phi_\epsilon \Vert_{\delta _0}\right) + \Vert G_\epsilon^0 \Vert_{\delta_0}.
\end{equation}
If we compare two solutions $( w^{(1)}, \rho^{(1)})$ and $(w^{(2)}, \rho^{(2)})$ with the same inital data we obtain:
\begin{eqnarray}
\label{eq-comp1}
  \Vert  G^{(1)}_\epsilon - G_\epsilon^{(2)} \Vert_{\delta_0}  &\leq& \eta C\Big( (\Vert w_\epsilon^{(1)} - w^{(2)}_\epsilon \Vert_{\delta_0} + \Vert  G_\epsilon^{(1)}-  G^{(2)}_\epsilon \Vert_{\delta_0}) \nonumber \\ 
& &\times (\Vert w_\epsilon^{(1)} \Vert_{\delta_0} + \Vert w^{(2)}_\epsilon \Vert_{\delta_0}  + \Vert G_\epsilon^{(1)} \Vert_{\delta_0} + \Vert G^{(2)}_\epsilon \Vert_{\delta_0}) ( \Vert\rho^{(1)}_\epsilon \Vert_{\delta_0} + \Vert \rho^{(2)}_\epsilon \Vert_{\delta_0}) \nonumber\\
&+& (\Vert w_\epsilon^{(1)} \Vert_{\delta_0}^2 + \Vert  w^{(2)}_\epsilon \Vert_{\delta_0}^2  + \Vert G_\epsilon^{(1)} \Vert_{\delta_0}^2 + \Vert  G^{(2)}_\epsilon \Vert_{\delta_0}^2)(\Vert \rho_\epsilon^{(1)} - \rho^{(2)}_\epsilon \Vert_{\delta_0}) \nonumber \\
&+& \Vert \rho_\epsilon^{(1)}-\rho_\epsilon^{(2)} \Vert_{\delta_0}(\Vert\epsilon \partial_{x_\parallel}\phi_\epsilon^{(1)} \Vert_{\delta_0}+ \Vert\epsilon \partial_{x_\parallel}\phi_\epsilon^{(2)} \Vert_{\delta_0})\nonumber \\
&+& (\Vert \rho_\epsilon^{(1)} \Vert_{\delta_0}+\Vert \rho_\epsilon^{(2)} \Vert_{\delta_0})\Vert \epsilon \partial_{x_\parallel}\phi_\epsilon^{(1)}-\epsilon \partial_{x_\parallel}\phi_\epsilon^{(2)}\Vert_{\delta_0} \nonumber \\
&+& \Vert \sqrt{\epsilon} E^{(1)}_{\epsilon,\parallel} - \sqrt{\epsilon}E^{(2)}_{\epsilon,\parallel} \Vert_{\delta_0}(\Vert \sqrt{\epsilon} E^{(1)}_{\epsilon,\parallel}\Vert_{\delta_0} + \Vert \sqrt{\epsilon} E^{(2)}_{\epsilon,\parallel}\Vert_{\delta_0})\Big) .
\end{eqnarray}

Likewise we get the same estimates on $\Vert\sqrt{\epsilon}E_{\epsilon,\parallel}\Vert_{\delta_0}$ since we have the formula:
\begin{equation}
 \mathcal{F}_\parallel (\sqrt{\epsilon} E_{\epsilon,\parallel})(t,k_\parallel) = \int_0^t \left(\frac{1}{ik_\parallel} [\sin(\frac{t-s}{\sqrt{\epsilon}})] \mathcal{F}_\parallel g_\epsilon(s,k_\parallel)\right) ds + \mathcal{F}_\parallel (\sqrt{\epsilon} E_{\epsilon,\parallel}^0),
\end{equation}
with
\begin{equation}
 \label{E0}
E_{\epsilon,\parallel}^0 = E_{\epsilon,\parallel}(0,x) \cos(\frac{s}{\sqrt{\epsilon}})+\sqrt{\epsilon} \partial_t E_{\epsilon,\parallel}(0,x) \sin(\frac{s}{\sqrt{\epsilon}}).
\end{equation}

\subsubsection{Estimate on $E_\epsilon^\perp$ and $\epsilon \partial_{x_\parallel} \phi_\epsilon$}

We now use the scaled Poisson equation satisfied by $\phi_\epsilon$ to get some a priori estimates.

The principle here is to look at the symbols of the operators involved in the  Poisson equations. Accordingly, we compute in Fourier variables:
\begin{equation}
\epsilon^2k_\parallel^2 \mathcal{F}\phi_\epsilon + \vert k_\perp\vert^2 \mathcal{F}\phi_\epsilon= \mathcal{F}\left(\rho_\epsilon - \int \rho_\epsilon dx_\perp\right).
\end{equation}
Thus it comes:

$$\mathcal{F}\phi_\epsilon=\frac{\mathcal{F}(\rho_\epsilon - \int \rho_\epsilon dx_\perp)}{\epsilon^2 k_\parallel^2 + \vert k_\perp\vert^2}.$$

Since $\int (\rho_\epsilon - \int \rho_\epsilon dx_\perp) dx_\perp =0$, we have for all $k_\parallel$:
$$\mathcal{F}\left(\rho_\epsilon - \int \rho_\epsilon dx_\perp\right)(0,k_\parallel)=0.$$

Thus it comes, for all $k_\perp, k_\parallel$:
\begin{equation}
 \vert \mathcal{F}\phi_\epsilon \vert \leq \frac{\vert \mathcal{F}(\rho_\epsilon - \int \rho_\epsilon dx_\perp)\vert}{\vert k_\perp\vert^2}.
\end{equation}
In particular we easily get, using the relation $E_\epsilon^\perp = -\nabla^\perp \phi_\epsilon$:

\begin{equation*}
 \vert \mathcal{F}E^\perp_\epsilon \vert \leq \frac{\vert \mathcal{F}(\rho_\epsilon - \int \rho_\epsilon dx_\perp)\vert}{\vert k_\perp\vert} \leq \left\vert \mathcal{F}\left(\rho_\epsilon - \int \rho_\epsilon dx_\perp\right)\right\vert.
\end{equation*}

Hence:
\begin{equation}
\label{elec}
 \Vert E^\perp_\epsilon \Vert_{\delta_0} \leq C \Vert\rho_\epsilon \Vert_{\delta_0}.
\end{equation}

Likewise, since $a b \leq \frac{1}{2}(a^2 + b^2)$ and $\vert k_\perp\vert \geq 1$:

\begin{equation*}
 \vert \mathcal{F}(\epsilon \partial_{x_\parallel} \phi_\epsilon )\vert \leq \frac{\epsilon \vert k_\parallel \vert \vert \mathcal{F}(\rho_\epsilon - \int \rho_\epsilon dx_\perp)\vert}{\epsilon^2 k_\parallel^2 + \vert k_\perp\vert^2} \leq \frac{1}{2} \vert \mathcal{F}(\rho_\epsilon - \int \rho_\epsilon dx_\perp)\vert,
\end{equation*}

and consequently:
\begin{equation}
 \Vert \epsilon \partial_{x_\parallel} \phi_\epsilon \Vert_{\delta_0} \leq C \Vert\rho_\epsilon \Vert_{\delta_0}.
\end{equation}

And if we compare two solutions with the same initial data:
\begin{equation}
 \Vert E^{\perp,(1)}_\epsilon  - E^{\perp,(2)}_\epsilon \Vert_{\delta_0} +  \Vert \epsilon \partial_{x_\parallel}  \phi^{(1)}_\epsilon - \epsilon\partial_{x_\parallel}  \phi^{(2)}_\epsilon  \Vert_{\delta_0}  \leq C \Vert \rho^{(1)}_\epsilon - \rho^{(2)}_\epsilon\Vert_{\delta_0}.
\end{equation}

\subsubsection{Estimate on $\rho_\epsilon$ and $w_\epsilon$}
We now use the conservation laws satisfied by $\rho_\epsilon$ and $w_\epsilon$ to get the appropriate estimates.

The density $\rho_\epsilon$ satisfies the equation:
$$  \partial_t \rho_\epsilon + \nabla_\perp (E^\perp_\epsilon \rho_\epsilon) + \partial_\parallel((w_{\epsilon}+G_\epsilon) \rho_\epsilon)= 0.$$
With the same kind of computations as before and thanks to estimate (\ref{elec}) we get:
\begin{eqnarray*}
 \vert \rho_\epsilon \vert_\delta &\leq& \int_0^t \vert \partial_t \rho_\epsilon \vert_\delta + \vert \rho_\epsilon(0) \vert_\delta \\
&\leq& \Vert \rho_\epsilon(0) \Vert_{\delta_0} + \int_0^t (\delta_0-\delta-\frac{s}{\eta})^{-\beta} \Vert \rho_\epsilon \Vert_{\delta_0}(\Vert \rho_\epsilon \Vert_{\delta_0}+\Vert w_\epsilon \Vert_{\delta_0}+\Vert G_\epsilon \Vert_{\delta_0})ds .
\end{eqnarray*}

Similarly we estimate $\vert \partial_{x_i} \rho_\epsilon\vert_{\delta}$ by differentiating with respect to $x_i$ the equation satisfied by $\rho_\epsilon$.
Finally we get:
\begin{equation}
\label{reg-rho}
 \Vert \rho_\epsilon \Vert_{\delta_0} \leq \eta C \Vert \rho_\epsilon \Vert_{\delta_0}(\Vert \rho_\epsilon \Vert_{\delta_0}+\Vert w_\epsilon \Vert_{\delta_0}+\Vert G_\epsilon \Vert_{\delta_0}).
\end{equation}

If we compare two solutions with the same initial conditions, we get likewise:
\begin{eqnarray}
\label{eq-comp2}
 \Vert \rho^{(1)}_\epsilon -\rho^{(2)}_\epsilon \Vert_{\delta_0}  &\leq& \eta C \Big( (\Vert \rho^{(1)}_\epsilon \Vert_{\delta_0} + \Vert \rho^{(2)}_\epsilon \Vert_{\delta_0})(\Vert w^{(1)}_\epsilon - w^{(2)}_\epsilon \Vert_{\delta_0}+\Vert  G^{(1)}_\epsilon - G^{(2)}_\epsilon \Vert_{\delta_0}) \nonumber \\
&+& (\Vert \rho^{(1)}_\epsilon \Vert_{\delta_0} + \Vert \rho^{(2)}_\epsilon \Vert_{\delta_0} +\Vert w^{(1)}_\epsilon \Vert_{\delta_0}+\Vert w^{(2)}_\epsilon \Vert_{\delta_0}+\Vert G^{(1)}_\epsilon \Vert_{\delta_0}+\Vert G^{(2)}_\epsilon \Vert_{\delta_0})\nonumber \\
&\times& (\Vert \rho^{(1)}_\epsilon -\rho^{(2)}_\epsilon \Vert_{\delta_0})\Big).
\end{eqnarray}

In the same fashion, we estimate the $\delta_0$ norm of $w_\epsilon$:
\begin{equation}
 \Vert w_\epsilon \Vert_{\delta_0} \leq \eta C\left((\Vert w_\epsilon \Vert_{\delta_0}+1)\Vert \rho_\epsilon \Vert_{\delta_0} + (\Vert w_\epsilon \Vert_{\delta_0}+\Vert G_\epsilon \Vert_{\delta_0})^2+\Vert \epsilon \partial_\parallel \phi_\epsilon\Vert_{\delta_0}  \right),
\end{equation}
and if we compare two solutions with the same initial data:
\begin{eqnarray}
\label{eq-comp3}
 \Vert w^{(1)}_\epsilon -w^{(2)}_\epsilon \Vert_{\delta_0}  &\leq& \eta C \Big( (\Vert \rho^{(1)}_\epsilon \Vert_{\delta_0}+ \Vert \rho^{(2)}_\epsilon \Vert_{\delta_0})(\Vert w^{(1)}_\epsilon \Vert_{\delta_0} + \Vert w^{(2)}_\epsilon \Vert_{\delta_0}) \nonumber \\
&+& (\Vert  w^{(1)}_\epsilon -   w^{(2)}_\epsilon \Vert_{\delta_0}+ \Vert \rho^{(1)}_\epsilon -  \rho^{(2)}_\epsilon \Vert_{\delta_0}) \nonumber \\
&+& (\Vert w^{(1)}_\epsilon \Vert_{\delta_0}+\Vert w^{(2)}_\epsilon \Vert_{\delta_0}+\Vert G^{(1)}_\epsilon \Vert_{\delta_0}+\Vert G^{(2)}_\epsilon \Vert_{\delta_0})\nonumber \\
& & \times(\Vert w^{(1)}_\epsilon -  w^{(2)}_\epsilon \Vert_{\delta_0}+\Vert G^{(1)}_\epsilon - G^{(2)}_\epsilon \Vert_{\delta_0}) \nonumber \\
& &\Vert \epsilon \partial_\parallel \phi^{(1)}_\epsilon -  \epsilon \partial_\parallel \phi^{(1)}_\epsilon\Vert_{\delta_0} \Big).
\end{eqnarray}

\subsection{Approximation scheme} 
\label{sec-approx}
We use the usual approximation scheme for Cauchy-Kovalevskaya type of results (\cite{Caf}).

We define $\rho_{\epsilon}^{n},w_{\epsilon}^{n},G_{\epsilon}^{n},V_{\epsilon}^{n}, \phi_{\epsilon}^{n}$ by recursion:

\underline{Initialization}
For $0<t<\eta(\delta_0 -1)$, we define:

$$ \rho_\epsilon^0(t)=\rho_\epsilon(0), $$
$$ w_{\epsilon}^{0}=v_\epsilon(0) - G_\epsilon^0,$$
$$  -\epsilon^2 \partial^2_{x_\parallel} \phi^{0}_\epsilon - \Delta_{x_\perp} \phi^{0}_\epsilon = \rho^{0}_\epsilon - \int \rho^{0}_\epsilon dx_\perp,$$

$$E_{\epsilon}^{\perp, 0}= - \nabla^\perp \phi_\epsilon^0,$$

and $-\epsilon^2 \partial_{x_\parallel}^2 V_{\epsilon}^{0}=\rho_\epsilon^0 - \int \rho_\epsilon^0 dx$. Finally, $G_\epsilon^0$ is given by formula $G_\epsilon^0= -\int_0^t \partial_\parallel V_\epsilon^{0}$.

\underline{Recursion} We define $\rho_\epsilon^{n+1}, w_\epsilon^{n+1}$ by the equations:
$$ \left\{
\begin{array}{ll}
  \partial_t \rho_\epsilon^{n+1} + \nabla_\perp (E^{\perp,n}_\epsilon \rho^n_\epsilon) + \partial_\parallel((w^n_{\epsilon}+G^n_\epsilon) \rho^n_\epsilon)= 0 \\
  \partial_t w_{\epsilon}^{n+1} + \nabla_\perp (E^{\perp,n}_\epsilon (w^n_{\epsilon}+G^n_\epsilon)) + (w^n_{\epsilon} +G^n_\epsilon)\partial_\parallel(v^n_{\epsilon}+G^n_\epsilon)) = -\epsilon\partial_\parallel \phi^n_\epsilon(t,x_\parallel).
\end{array}
\right.$$

with the initial conditions: $\rho_\epsilon^{n+1}(0)=\rho_\epsilon(0)$ and $ w_{\epsilon}^{n+1}=v_\epsilon(0) - G_\epsilon^0$.

Then we can define $\phi_\epsilon^{n+1}$ as the solution to the Poisson equation:
$$-\epsilon^2 \partial^2_{x_\parallel} \phi^{n+1}_\epsilon - \Delta_{x_\perp} \phi^{n+1}_\epsilon = \rho^{n+1}_\epsilon - \int \rho^{n+1}_\epsilon dx_\perp.$$

$$E_{\epsilon}^{\perp, n+1}= - \nabla^\perp \phi_\epsilon^{n+1},$$

Similarly,
$$-\epsilon^2 \partial_{x_\parallel}^2 V_{\epsilon}^{n+1}=\rho_\epsilon^{n+1}- \int \rho_\epsilon^{n+1} dx.$$
Then we can define $G_\epsilon^{n+1}$ with the formula: $G_\epsilon^{n+1}= -\int_0^t \partial_\parallel V_\epsilon^{n+1}$.

Now let $C_1$ be a constant larger than $\Vert \rho_\epsilon(0) \Vert_{\delta_0}$, $\Vert w_\epsilon(0) \Vert_{\delta_0}$, $\Vert G_\epsilon(0) \Vert_{\delta_0}$, $\Vert \sqrt{\epsilon} E_\epsilon(0) \Vert_{\delta_0}$ and all the other constants in the previous estimates. It is possible to choose $\eta$ small enough with respect to $C_1$ to propagate the following estimates by recursion (we refer to \cite{Gre1} for details; we use in particular estimates (\ref{eq-comp1}),(\ref{eq-comp2}),(\ref{eq-comp3})). There exists $C_2>C_1$, for all $n\geq 1$:
\begin{enumerate}[(i)]
\item

$$ \left\{
\begin{array}{ll}
\Vert \rho_\epsilon^n  \Vert_{\delta_0}\leq {C_2},   \\
\Vert w_\epsilon^n \Vert_{\delta_0}\leq \ C_2,  \\ 
\Vert G_\epsilon^n \Vert_{\delta_0}\leq C_2,   \\
\Vert \sqrt{\epsilon} E_{\epsilon,\parallel}^n \Vert_{\delta_0}\leq C_2.
\end{array}
\right.$$

\item

$$ \left\{
\begin{array}{ll}
\Vert \rho_\epsilon^n - \rho_\epsilon^{n-1} \Vert_{\delta_0}\leq \frac{C_2}{2^n},   \\
 \Vert w_\epsilon^n - w_\epsilon^{n-1} \Vert_{\delta_0}\leq \frac{C_2}{2^n}, \\ 
 \Vert G_\epsilon^n - G_\epsilon^{n-1} \Vert_{\delta_0}\leq \frac{C_2}{2^n}, \\
 \Vert \sqrt{\epsilon} E_{\epsilon, \parallel}^n -\sqrt{\epsilon} E_{\epsilon, \parallel}^{n-1} \Vert_{\delta_0}\leq \frac{C_2}{2^n}.
\end{array}
\right.$$

\end{enumerate}

This proves that the sequences $\rho_{\epsilon}^{n},w_{\epsilon}^{n},G_{\epsilon}^{n}, \sqrt{\epsilon}E_\epsilon, E_{\epsilon}^{\perp, n},\epsilon\partial_{x_\parallel}\phi_{\epsilon}^{n}$ are Cauchy sequences (with respect to $n$) in $B_{\delta_0}^\eta$, and consequently converge strongly in $B_{\delta_0}^\eta$, the estimates being uniform in $\epsilon$.
It is clear that the limit satisfies System (\ref{sys}).

The requirement $\delta_1<\delta_0$ and the explicit life span in Theorem \ref{exi} come directly from the definition of the $B_{\delta_0}^\eta$ spaces.

For the uniqueness part, one can simply notice that the estimates we have shown allow us to prove that the application $F$ defined by:
$$ F(\rho_\epsilon, w_\epsilon ) =\begin{pmatrix} \int_0^t ( - \nabla_\perp (E^{\perp}_\epsilon \rho_\epsilon) - \partial_\parallel((w_{\epsilon}+G_\epsilon) \rho_\epsilon)) )ds \\ \int_0^t(-\nabla_\perp (E^{\perp}_\epsilon (w_{\epsilon}+G_\epsilon)) - (w_{\epsilon} +G_\epsilon)\partial_\parallel(v_{\epsilon}+G_\epsilon))  -\epsilon\partial_\parallel \phi_\epsilon(t,x_\parallel))ds \end{pmatrix}, $$
is a contraction on the closed subset $B$ of $B_{\delta_0}\times B_{\delta_0}$, defined by:
\[
B= \left\{\rho, w \in B_{\delta_0}; \Vert \rho\Vert_{{\delta_0}}\leq C,  \Vert w\Vert_{{\delta_0}}\leq C \right\},
\]
with $C$ large enough, provided that $\eta$ is chosen small enough. The uniqueness of the analytic solution then follows.

\subsection*{Proof of Proposition \ref{exi2}}

We can lead the same analysis as for the proof of Theorem \ref{exi2}, but even simpler since here we do not have to deal anymore with the fast oscillations in time.
The only slightly different point is to estimate the norm of $\int_0^t -\partial_\parallel p ds = \int_0^t \partial_\parallel \int \rho v^2 dx_\perp ds$, which is straightforward:

$$\left\Vert \int_0^t  \partial_\parallel p ds\right\Vert _{{\delta_0}}  \leq \eta C \Vert \rho \Vert _{{\delta_0}}\Vert v \Vert _{{\delta_0}}^2.$$

Then as before, we can use a contraction argument to prove the proposition.

\section{Proof of Theorem \ref{con}}
\label{sec-proof2}
\subsection*{Step 1: Another average in time for $E_{\epsilon,\parallel}$}

We have observed previously that the wave equation (\ref{waves}) describing the time oscillations of $E_{\epsilon,\parallel}$ was the same as the one appearing in Grenier's work, except for a slight change in the source. Therefore the following decomposition taken from \cite{Gre1} identically holds:

\begin{lem}
\label{osci}
 Under assumption $(H)$, there exist vector fields $E_\epsilon^1, E_\epsilon^2$ and $W_\epsilon$ such that $E_{\epsilon,\parallel}=E_\epsilon^1+E_\epsilon^2$ and a positive constant $C$ independent of $\epsilon$ such as:
\begin{enumerate}[(i)]
 \item $\Vert \sqrt{\epsilon}E_\epsilon^1 \Vert_{L^\infty(H^{s-1}_{x_\parallel})}\leq C$.
 \item $\partial_t W_\epsilon = E_\epsilon^1$, $\Vert W_\epsilon \Vert_{L^\infty(H^{s-1}_{x_\parallel})}\leq C$ and $W_\epsilon \rightharpoonup 0$ in $L^2$.
 \item $W^\epsilon(0)= -\epsilon \partial_t E_{\epsilon,\parallel}(0) = \int \rho_\epsilon(0)v_\epsilon(0) dx_\perp$. 
 \item $\Vert E_\epsilon^2 \Vert_{L^\infty(H^{s-1}_{x_\parallel})}\leq C$.
 \item $\int E^1_\epsilon dx_\parallel =  \int E^2_\epsilon dx_\parallel=0$.
\end{enumerate}

\end{lem}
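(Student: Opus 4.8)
Since Lemma \ref{osci} concerns only the oscillator (\ref{waves}), whose structure is exactly the one treated in \cite{Gre1}, the plan is to reduce it to a scalar forced harmonic oscillator and then reproduce Grenier's splitting, the only new input being the extra source term coming from the perpendicular drift. First I would strip off the common factor $\partial_{x_\parallel}$: every term on the right-hand side of (\ref{waves}) carries a $\partial_{x_\parallel}$, and $E_{\epsilon,\parallel}=-\partial_{x_\parallel}V_\epsilon$ has vanishing $x_\parallel$-average, so I work Fourier mode by mode in $k_\parallel\neq 0$ and divide by $ik_\parallel$. This yields $\epsilon\partial_t^2 E_{\epsilon,\parallel}+E_{\epsilon,\parallel}=\tilde g_\epsilon$ with $\tilde g_\epsilon=\partial_{x_\parallel}\int \rho_\epsilon v_\epsilon^2\,dx_\perp+\epsilon E_{\epsilon,\parallel}\partial_{x_\parallel}E_{\epsilon,\parallel}-\int \rho_\epsilon\,\epsilon\partial_{x_\parallel}\phi_\epsilon\,dx_\perp$, the $k_\parallel=0$ mode being set to $0$; this convention is exactly what will give (v). Using $(H)$, the Banach-algebra property of Lemma \ref{lemma}, and the Poisson estimate (\ref{elec}) together with $\Vert \epsilon\partial_{x_\parallel}\phi_\epsilon\Vert \le C\Vert \rho_\epsilon\Vert$, the source $\tilde g_\epsilon$ is bounded in $H^{s-1}_{x_\parallel}$ uniformly in $\epsilon$.

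I would then set $E^1_\epsilon$ to be the \emph{free} (homogeneous) solution carrying the genuine Cauchy data, namely $E^1_\epsilon=E^0_{\epsilon,\parallel}$ in the notation of (\ref{E0}), and $E^2_\epsilon:=E_{\epsilon,\parallel}-E^1_\epsilon$ the Duhamel forced part, so that $\mathcal F_\parallel E^2_\epsilon(t)=\frac{1}{\sqrt\epsilon}\int_0^t\sin\!\big(\tfrac{t-s}{\sqrt\epsilon}\big)\mathcal F_\parallel\tilde g_\epsilon(s)\,ds$. Finally $W_\epsilon$ is defined by $\partial_t W_\epsilon=E^1_\epsilon$ with the constant fixed by $W_\epsilon(0)=-\epsilon\partial_t E_{\epsilon,\parallel}(0)$; explicitly $W_\epsilon=W_\epsilon(0)+G^0_\epsilon$ with $G^0_\epsilon$ given by (\ref{G0}). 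The easy points then follow. Point (iii) comes from differentiating the Poisson equation once in time, $\epsilon\partial_t\partial_{x_\parallel}E_{\epsilon,\parallel}=\partial_t\int\rho_\epsilon dx_\perp=-\partial_{x_\parallel}\int\rho_\epsilon v_\epsilon dx_\perp$, which integrates on nonzero modes to $-\epsilon\partial_t E_{\epsilon,\parallel}=\int\rho_\epsilon v_\epsilon dx_\perp$. Point (i) holds because $\sqrt\epsilon E^1_\epsilon$ has the two coefficients $\sqrt\epsilon E_{\epsilon,\parallel}(0)$ (bounded by the $\sqrt\epsilon$-assumption on the initial discrepancy) and $\epsilon\partial_t E_{\epsilon,\parallel}(0)=-\int\rho_\epsilon(0)v_\epsilon(0)dx_\perp$ (bounded by $(H)$). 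For (ii), a direct integration shows that, thanks to the choice of $W_\epsilon(0)$, the non-oscillating term cancels and $\mathcal F_\parallel W_\epsilon=-\epsilon\partial_t\mathcal F_\parallel E_{\epsilon,\parallel}(0)\cos(t/\sqrt\epsilon)+\sqrt\epsilon\,\mathcal F_\parallel E_{\epsilon,\parallel}(0)\sin(t/\sqrt\epsilon)$ is purely oscillatory, giving the $H^{s-1}$ bound and, by the Riemann--Lebesgue lemma, the weak convergence $W_\epsilon\rightharpoonup 0$; and (v) holds since the $k_\parallel=0$ mode was discarded throughout.

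The hard part is (iv), the uniform bound on $E^2_\epsilon$. The singular prefactor $1/\sqrt\epsilon$ must be absorbed by one integration by parts in $s$, which replaces it by $\tilde g_\epsilon(t)-\cos(t/\sqrt\epsilon)\tilde g_\epsilon(0)-\int_0^t\cos\!\big(\tfrac{t-s}{\sqrt\epsilon}\big)\partial_s\tilde g_\epsilon(s)\,ds$. The delicate feature is that $\partial_t\tilde g_\epsilon$ is \emph{not} $O(1)$: the self-interaction term produces $\epsilon(\partial_t E_{\epsilon,\parallel})\partial_{x_\parallel}E_{\epsilon,\parallel}=-\big(\int\rho_\epsilon v_\epsilon dx_\perp\big)\partial_{x_\parallel}E_{\epsilon,\parallel}$, of size $1/\sqrt\epsilon$. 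This quadratic contribution is already present in \cite{Gre1} and is controlled there precisely because it oscillates and its resonance against the $\sin$ kernel produces no secular growth; I would invoke those estimates as they stand. Consequently the only genuinely new verification is that the additional source $\int\rho_\epsilon\,\epsilon\partial_{x_\parallel}\phi_\epsilon\,dx_\perp$ — together with its time derivative, obtained by differentiating the $\rho_\epsilon$-equation of (\ref{sys}) and reusing the Poisson bound on $\epsilon\partial_{x_\parallel}\phi_\epsilon$ — satisfies the same uniform $H^{s-1}$ bounds as the terms already handled by Grenier, so that his argument applies line by line and delivers (iv). The systematic loss of one derivative (from $H^s$ to $H^{s-1}$) is structural: it comes both from the surviving $\partial_{x_\parallel}$ in $\tilde g_\epsilon$ and from trading time derivatives for spatial derivatives via the evolution equations.
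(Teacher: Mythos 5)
Your reduction to a scalar forced oscillator, the verification that the source $\tilde g_\epsilon$ is $O(1)$ in $H^{s-1}_{x_\parallel}$, and the identities behind (iii) and (v) are all correct and consistent with the paper's setup. The gap is in the decomposition itself: taking $E^1_\epsilon$ to be the free solution $E^0_{\epsilon,\parallel}$ and $E^2_\epsilon$ the Duhamel part does not give (iv). The point is that $\tilde g_\epsilon$ is not a \emph{slow} $O(1)$ source: it contains an $O(1)$-amplitude component oscillating at the resonant frequency $1/\sqrt\epsilon$. Indeed $v_\epsilon=w_\epsilon+G_\epsilon$ with $G_\epsilon=\int_0^t E_{\epsilon,\parallel}\,ds$ carrying $O(1)$ oscillations at frequency $1/\sqrt{\epsilon}$ for ill-prepared data (this is precisely the content of Theorem \ref{con}), so $\int\rho_\epsilon v_\epsilon^2\,dx_\perp$ contains the resonant cross term $2G_\epsilon\int\rho_\epsilon w_\epsilon\,dx_\perp$. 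Under resonant forcing the amplitude of the oscillation of $E_{\epsilon,\parallel}$ evolves on the $O(1)$ time scale --- this is exactly why the correctors $E_\pm$ of Theorem \ref{con} obey a nontrivial transport equation instead of being constants --- so $E_{\epsilon,\parallel}-E^0_{\epsilon,\parallel}$ is genuinely of size $1/\sqrt\epsilon$, not $O(1)$. Concretely, in your integrated-by-parts formula the term $\int_0^t\cos\bigl(\tfrac{t-s}{\sqrt\epsilon}\bigr)\partial_s\tilde g_\epsilon(s)\,ds$ contains contributions $\int_0^t\cos\bigl(\tfrac{t-s}{\sqrt\epsilon}\bigr)h(s)E_{\epsilon,\parallel}(s)\,ds$ with $h$ slow and $O(1)$; writing $E_{\epsilon,\parallel}\sim\tfrac{1}{\sqrt\epsilon}A_\pm(s) e^{\pm is/\sqrt\epsilon}$, the product of the two oscillating factors has a part that is non-oscillating in $s$, giving $\tfrac{1}{2\sqrt\epsilon}e^{\pm it/\sqrt{\epsilon}}\int_0^t hA_\mp\,ds+O(1)$, which is $O(1/\sqrt\epsilon)$ since $\int_0^t hA_\mp\,ds$ has no reason to be $O(\sqrt\epsilon)$. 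So the claim of ``no secular growth'' is false, and Grenier's estimates cannot be invoked ``as they stand'' because they are proved for a different object.

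The paper (following \cite{Gre1}) avoids this by defining $E^2_\epsilon$ as the moving average of $E_{\epsilon,\parallel}$ over exactly one period, $\mathcal F_\parallel E^2_\epsilon(t,k_\parallel)=\tfrac{1}{2\pi\sqrt\epsilon}\int_t^{t+2\pi\sqrt\epsilon}\mathcal F_\parallel E_{\epsilon,\parallel}(s,k_\parallel)\,ds$, and $E^1_\epsilon=E_{\epsilon,\parallel}-E^2_\epsilon$. Inserting the Duhamel representation into this average, the free part and the entire contribution of the source over $[0,t]$ cancel exactly (the kernel $\sin\bigl(\tfrac{s-\sigma}{\sqrt\epsilon}\bigr)$ integrates to zero over a full period in $s$), and only the window $\sigma\in[t,t+2\pi\sqrt\epsilon]$ survives; its length $2\pi\sqrt\epsilon$ absorbs the $1/\sqrt\epsilon$ prefactor, so (iv) follows from the $L^\infty_t(H^{s-1})$ bound on $\tilde g_\epsilon$ alone, with no time derivative of the source needed. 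This choice also tracks the slowly varying amplitude, which is what makes Lemma \ref{osci2} and Step 4 work; with your constant-amplitude $E^1_\epsilon$ the identification of the time-dependent correctors $E_\pm$ would also break down. Your observation that the only genuinely new term is $\int\rho_\epsilon\,\epsilon\partial_{x_\parallel}\phi_\epsilon\,dx_\perp$, bounded via the Poisson estimate, is the right additional ingredient, but it must be fed into this averaging argument rather than into the free/forced splitting.
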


\begin{proof}[Idea of the proof] The idea in order to build $E^2_\epsilon$ is to cut off the essential temporal oscillations (of frequency $\frac{1}{\sqrt{\epsilon}}$). Hence, we can define $E^2_\epsilon$ defined by its Fourier transform:
$$\mathcal{F}_\parallel E^2_\epsilon (t,k_\parallel)= \frac{1}{2\pi \sqrt{\epsilon}} \int_t^{t+2\pi \sqrt{\epsilon}} \mathcal{F}_\parallel  E_{\epsilon,\parallel}(s,k_\parallel) ds$$ and $E^1_\epsilon=E_{\epsilon,\parallel} - E^2_\epsilon$, so that $E^1_\epsilon$ gathers the essential information on the oscillations.

Let us refer to \cite{Gre1} for details.
 
\end{proof}

\subsection*{Step 2: Uniform bound on $E_\epsilon^\perp$ and $\partial_{x_\parallel} \phi_\epsilon$}

Under hypothesis $(H)$ we clearly get that $E_\epsilon^\perp$  and $\partial_{x_\parallel} \phi_\epsilon$ are bounded in $L^\infty_t(H^{s-1})$ uniformly with respect to $\epsilon$ (we do not need any gain of elliptic regularity).

Since 
$$\int(\rho_\epsilon - \int \rho_\epsilon dx_\perp)dx_\perp=0,$$ 
we easily check that:
$$\Vert \phi_\epsilon\Vert_{H^s_{x_\perp,x_\parallel}} \leq \Vert \rho -\int \rho dx_\perp \Vert_{H^s_{x_\perp,x_\parallel}}.$$
Hence the result.

\subsection*{Step 3: Passage to the strong limit}
Let $w_\epsilon=v_\epsilon - W_\epsilon$.
According to Lemma \ref{osci}, $w_\epsilon$ is uniformly bounded in $L^\infty_t(H^{s-1})$. On the other hand, we have :
\begin{equation}
 \partial_t w_\epsilon + \nabla_\perp(E_\epsilon^\perp w_\epsilon) + w_\epsilon \partial_{x_\parallel} w_\epsilon =-\epsilon \partial_{x_\parallel}\phi_\epsilon + E_\epsilon^2  - w_\epsilon \partial_{x_\parallel} W_\epsilon - W_\epsilon \partial_{x_\parallel} w_\epsilon -W_\epsilon \partial_{x_\parallel} W_\epsilon.
\end{equation}
(Notice that $\nabla_{\perp}(E^\perp W_\epsilon)= W_\epsilon \nabla_{\perp}(E^\perp)=0$.)

Thus, using the uniform bounds, we can see that $\partial_t w_\epsilon$ is bounded in $L^\infty_t(H^{s-2})$ and thanks to the Aubin-Lions lemma (see J. Simon \cite{sim}), $w_\epsilon$ converges strongly (up to a subsequence) to some function $w$ in $\mathcal{C}([0,T], H^{s'-1})$ for all $s'<s$. 

According to Step 2, $\epsilon \partial_{x_\parallel} \phi_\epsilon \rightharpoonup 0$ in the distributional sense. The following convergence also holds in the sense of distributions, according to Lemma \ref{osci}:
$$w_\epsilon \partial_{x_\parallel} W_\epsilon +  W_\epsilon \partial_{x_\parallel} w_\epsilon \rightharpoonup 0,$$
and $W_\epsilon \partial_{x_\parallel} W_\epsilon+E_\epsilon^2$ weakly converges to some function  $F$ since it is clearly bounded in $L^\infty(H^{s-2}_{x_\parallel})$.

Furthermore, since:
\[
\int \left(W_\epsilon \partial_{x_\parallel} W_\epsilon+E_\epsilon^2\right) dx_\parallel =0,
\]
this implies that $\int F dx_\parallel=0$, and thus there exists $p$ such that $F=-\partial_{x_\parallel} p$.

Since $E_\epsilon^\perp$ is uniformly bounded in $L^\infty_t(H^{s-1})$, it also weakly-* converges, up to a subsequence,  to some function $E^\perp$.

We now use the strong limit of $w_\epsilon$ in $\mathcal{C}([0,T], H^{s'-1})$  in order to pass to the limit in the sense of distributions in the convection terms.
As a consequence, it comes, passing to the limit in the sense of distributions:
\begin{equation}
 \partial_t w + \nabla_\perp(E^\perp w) + w \partial_{x_\parallel} w = -\partial_{x_\parallel}p.
\end{equation}

The equation satisfied by $\rho_\epsilon$ is:
$$\partial_t \rho_\epsilon + \nabla_\perp(E_\epsilon^\perp \rho_\epsilon) + \partial_\parallel (w_\epsilon \rho_\epsilon) = - \partial_\parallel (W_\epsilon \rho_\epsilon).$$

The proof is similar for $\rho_\epsilon$ which converges strongly, up to a subsequence, to $\rho$ in $\mathcal{C}([0,T], H^{s'})$ for all $s'<s$. One can likewise take limits in the Poisson equations.
We finally obtain (\ref{simple}). By uniqueness of the solutions to (\ref{simple}), the limits actually hold without extraction.

\subsection*{Step 4: Equations for the correctors}

The final step relies on the following lemma proved in Grenier's paper \cite{Gre1} (the main point is to notice that the application $f \mapsto e^{\pm it/\sqrt{\epsilon}} f$ is an isometry on $L^\infty(H^s)$ for any $s$.)
\begin{lem}
\label{osci2}
There exist two correctors $E_+(t,x_\parallel)$ and $E_-(t,x_\parallel)$ in $\mathcal{C}(H^{s-1})$ such that, for all $s'<s$:
\begin{itemize}
\item $\Vert \sqrt{\epsilon} E^1_\epsilon  - e^{it/\sqrt{\epsilon}}E_+ - e^{-it/\sqrt{\epsilon}}E_- \Vert_{\mathcal{C}(H^{s'-1})} \rightarrow 0$,
\item $\Vert W_\epsilon - \frac{1}{i}\left(e^{it/\sqrt{\epsilon}}E_+ -e^{-it/\sqrt{\epsilon}}E_-  \right)\Vert_{\mathcal{C}(H^{s'-1})} \rightarrow 0$.
\end{itemize}
\end{lem}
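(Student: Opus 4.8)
The plan is to extract from $\sqrt{\epsilon}E_{\epsilon,\parallel}$ an explicit pair of slowly‑modulated amplitudes and to reduce the whole statement to their strong convergence. Write $\theta=t/\sqrt{\epsilon}$ and let $h_\epsilon$ be the ``potential‑level'' source of (\ref{waves}), i.e. $\mathcal{F}_\parallel h_\epsilon=\frac{1}{ik_\parallel}\mathcal{F}_\parallel g_\epsilon$, so that
\[
h_\epsilon=\partial_{x_\parallel}\int\rho_\epsilon v_\epsilon^2\,dx_\perp+\epsilon E_{\epsilon,\parallel}\partial_{x_\parallel}E_{\epsilon,\parallel}-\int\rho_\epsilon(\epsilon\partial_{x_\parallel}\phi_\epsilon)\,dx_\perp .
\]
Splitting $\sin(\tfrac{t-s}{\sqrt{\epsilon}})=\frac{1}{2i}(e^{i(t-s)/\sqrt{\epsilon}}-e^{-i(t-s)/\sqrt{\epsilon}})$ in the Duhamel formula recalled in the a priori estimates, and using $\epsilon\partial_t E_{\epsilon,\parallel}(0)=-W_\epsilon(0)$ and $\sqrt{\epsilon}E_{\epsilon,\parallel}(0)=-\sqrt{\epsilon}\partial_{x_\parallel}V_\epsilon(0)$ from Lemma \ref{osci} $(iii)$, I define
\[
\mathcal{E}_\epsilon^\pm(t):=\tfrac{1}{2}\bigl(-\sqrt{\epsilon}\partial_{x_\parallel}V_\epsilon(0)\pm iW_\epsilon(0)\bigr)\pm\tfrac{1}{2i}\int_0^t e^{\mp is/\sqrt{\epsilon}}\,h_\epsilon(s)\,ds,
\]
which yields the exact identity $\sqrt{\epsilon}E_{\epsilon,\parallel}=e^{i\theta}\mathcal{E}_\epsilon^++e^{-i\theta}\mathcal{E}_\epsilon^-$, with initial values matching the $E_\pm(0)$ prescribed in Theorem \ref{con}. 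Since $f\mapsto e^{\pm i\theta}f$ is, at each fixed $t$, an isometry of every $H^\sigma$, once $\mathcal{E}_\epsilon^\pm\to E_\pm$ strongly in $\mathcal{C}(H^{s'-1})$ the first assertion follows from $\sqrt{\epsilon}E_\epsilon^1=\sqrt{\epsilon}E_{\epsilon,\parallel}-\sqrt{\epsilon}E_\epsilon^2=e^{i\theta}\mathcal{E}_\epsilon^++e^{-i\theta}\mathcal{E}_\epsilon^-+O(\sqrt{\epsilon})$, Lemma \ref{osci} $(iv)$ controlling the $O(\sqrt{\epsilon})$ remainder.

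The decisive feature of this choice is that the singular factor $1/\sqrt{\epsilon}$ disappears upon differentiating:
\[
\partial_t\mathcal{E}_\epsilon^\pm=\pm\tfrac{1}{2i}\,e^{\mp i\theta}\,h_\epsilon .
\]
Under $(H)$, together with the elliptic bound of Step~2 on $\epsilon\partial_{x_\parallel}\phi_\epsilon$, the source $h_\epsilon$ is bounded in $L^\infty_t(H^{s-1})$; by the isometry property so is $\partial_t\mathcal{E}_\epsilon^\pm$, and integrating in time shows that $\mathcal{E}_\epsilon^\pm$ is itself bounded in $L^\infty_t(H^{s-1})$. The Aubin--Lions lemma (Simon \cite{sim}) then gives relative compactness of $\{\mathcal{E}_\epsilon^\pm\}$ in $\mathcal{C}(H^{s'-1})$ for every $s'<s$, hence limits $E_\pm\in\mathcal{C}(H^{s-1})$ along a subsequence.

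To identify the limit and upgrade to full convergence, I note that the limit of $\mathcal{E}_\epsilon^\pm$ is governed by the weak limit of the oscillating integrand $e^{\mp i\theta}h_\epsilon$, that is by the resonant ($e^{\pm i\theta}$) temporal component of $h_\epsilon$. Writing $v_\epsilon=w_\epsilon+W_\epsilon$ as in Step~3, only the cross term $2\rho_\epsilon w_\epsilon W_\epsilon$ in $\int\rho_\epsilon v_\epsilon^2\,dx_\perp$ carries the frequency $\pm1/\sqrt{\epsilon}$ (the terms $W_\epsilon^2$ and $\epsilon E_{\epsilon,\parallel}\partial_{x_\parallel}E_{\epsilon,\parallel}$ only produce the frequencies $0$ and $\pm2/\sqrt{\epsilon}$), and, since $w_\epsilon\to v$ and $\rho_\epsilon\to\rho$ strongly, this component yields precisely the transport operator $\bigl(\int\rho v\,dx_\perp\bigr)\partial_{x_\parallel}$. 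Thus $E_\pm$ solve $\partial_t E_\pm+\bigl(\int\rho v\,dx_\perp\bigr)\partial_{x_\parallel}E_\pm=0$ with the stated initial data, and uniqueness for this linear transport equation removes the extraction. The second assertion finally follows by integrating $\partial_t W_\epsilon=E_\epsilon^1$: an integration by parts gives $W_\epsilon=\frac{1}{i}(e^{i\theta}\mathcal{E}_\epsilon^+-e^{-i\theta}\mathcal{E}_\epsilon^-)+\int_0^t(h_\epsilon-E_\epsilon^2)\,ds$, and the last integral tends to $0$ because both the oscillating part of $h_\epsilon$ and $\epsilon\partial_t^2E_\epsilon^2$ integrate to $O(\sqrt{\epsilon})$.

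The main obstacle is exactly this resonance analysis: one must isolate the $e^{\pm i\theta}$ part of the nonlinear source and show that every non‑resonant contribution converges to $0$ strongly after the time integration $\int_0^t$, which demands enough time‑regularity of $h_\epsilon$ — inherited from the equations and $(H)$ — to run a non‑stationary‑phase estimate. The one point specific to the present model, compared with the scalar framework of Grenier \cite{Gre1}, is that $h_\epsilon$ involves the transverse averages $\int\cdot\,dx_\perp$; but these keep every relevant quantity a function of $x_\parallel$ alone and commute with the fast time oscillations, so the perpendicular dynamics does not interfere and the whole oscillation analysis is identical to that of \cite{Gre1}, to which I would refer for the remaining estimates.
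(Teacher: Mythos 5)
Your proof is correct and follows essentially the same route as the paper, which for this lemma simply invokes Grenier \cite{Gre1}: you reconstruct his argument by writing $\sqrt{\epsilon}E_{\epsilon,\parallel}$ exactly as $e^{it/\sqrt{\epsilon}}\mathcal{E}_\epsilon^{+}+e^{-it/\sqrt{\epsilon}}\mathcal{E}_\epsilon^{-}$ with slowly varying amplitudes whose time derivatives are bounded, then exploiting the isometry of $f\mapsto e^{\pm it/\sqrt{\epsilon}}f$ together with compactness and identification of the limit via the resonant part of the source --- precisely the point the paper singles out as ``the main point''. The one step you leave at the heuristic level, namely that $\int_0^t (h_\epsilon - E_\epsilon^2)\,ds \to 0$ (equivalently, that the sliding average $E_\epsilon^2$ captures exactly the non-resonant part of the source up to errors that integrate to $O(\sqrt{\epsilon})$), is the same technical estimate the paper itself delegates to \cite{Gre1}.
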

In particular we can deduce that:
\[
e^{-it/\sqrt{\epsilon}} \sqrt{\epsilon} E^1_\epsilon \rightharpoonup E_+
\]
(and similarly $e^{it/\sqrt{\epsilon}} \sqrt{\epsilon} E^1_\epsilon \rightharpoonup E_-$).

Then, the idea is to use Lemmas \ref{osci} and \ref{osci2} and the wave equation (\ref{waves}) in order to obtain the equations satisfied by $E_\pm$. By elementary (but rather tedious) computations we get:
$$\partial_t (\partial_{x_\parallel} E_\pm) + \left(\int \rho v dx_\perp\right) \partial_{x_\parallel} (\partial_{x_\parallel} E_\pm) =0.$$
Lemma \ref{osci2} also provides the initial conditions for $E_\pm$.

The proof of the theorem is now complete.

\section{Discussion on the sharpness of the results}
\label{sec-sharp}
\subsection{On the analytic regularity}
\label{sec-analytic1}

Let us recall that the multi-fluid system (\ref{grenierS}) is ill-posed in Sobolev spaces, because of the two-stream instabilities (remind that this is due to the coupling between the different phases of the fluid).

For system (\ref{simple}), we expect the situation to be similar. Due to the dependence on $x_\perp$ and the constraint $\int \rho dx_\perp =1$, system (\ref{simple}) is by nature a multi-fluid system.
Neverthless, one could maybe imagine that the dynamics in the $x_\perp$ variable could yield some mixing in $x_\perp$ and $x_\parallel$ (in the spirit of hypoellipticity results) and thus could perhaps  bring stability. Here we explain why this is not the case.

 The idea is to consider for (\ref{simple}) shear flows initial data. This will allow to exactly recover the multi-fluid equations (\ref{grenierS}). We take:
\[
E_0^\perp = (0, \varphi(x_1,x_\parallel),0),
\]
and consequently $\rho_0 =\nabla_\perp \wedge E_0^\perp = -\varphi'(x_1,u)$. We also assume that $v_0(x_1, x_\parallel)$ does not depend on $x_2$.

Then we observe that:
\begin{equation*}
\begin{split}
\nabla_\perp (E^\perp_0 \rho_0) = 0, \\
\nabla_\perp (E^\perp_0  v_0) = 0. 
\end{split}
\end{equation*}

With such initial data, system (\ref{simple}) reduces to:
\begin{equation}
\label{eq-multi}
\left\{
    \begin{array}{ll}
  \partial_t \rho +  \partial_\parallel(v_\parallel \rho)= 0 \\
  \partial_t v_\parallel + v_\parallel \partial_\parallel(v_\parallel) = -\partial_\parallel p(t,x_\parallel) \\
  \int \rho dx_1 = 1,\\
\end{array}
  \right.
\end{equation}
and we observe that there is no more dynamics in the $x_\perp$ variable. This is nothing but system (\ref{grenierS}) in dimension $1$, with $M=[0,1[$ and $\mu$ the Lebesgue measure.

Now, let us consider measure type of data in the $x_1$ variable for $\rho$ and $v$ (this corresponds to a ``degenerate'' version of the shear flows defined above). In particular if we choose:
\[
\varphi= \frac 1 2 \mathbbm{1}_{x_1\leq \frac 1 4} \rho_{0,1}(x_\parallel) +  \frac 1 2  \mathbbm{1}_{x_1\leq \frac 1 2} \rho_{0,2}(x_\parallel), 
\]
we get:
\begin{equation}
\begin{split}
\rho_0= \frac 1 2 \delta_{x_1=\frac 1 4} \rho_{0,1}(x_\parallel) + \frac 1 2 \delta_{x_2=\frac 1 2} \rho_{0,2}(x_\parallel), \\
v_0=\frac 1 2 \delta_{x_1=\frac 1 4} v_{0,1}(x_\parallel) + \frac 1 2 \delta_{x_1=\frac 1 2} v_{0,2}(x_\parallel) 
\end{split}
\end{equation}
and we obtain the following system for $\alpha=1, 2$:

\begin{equation}
\left\{
    \begin{array}{ll}
  \partial_t \rho_\alpha +  \partial_\parallel(v_\alpha \rho_\alpha)= 0 \\
  \partial_t v_\alpha + v_\alpha \partial_\parallel(v_\alpha) = -\partial_\parallel p(t,x_\parallel) \\
  \rho_1 + \rho_2=1.\\
\end{array}
  \right.
\end{equation}
This particular system was given as an example by Brenier in \cite{Br2} to illustrate ill-posedness in Sobolev spaces of the multi-fluid equations.

We denote $q=\rho_1 v_1$. Using the constraint $\rho^1 + \rho^2 =1$, we easily obtain that 
$$p_\parallel = -q^2 \left(\frac{1}{\rho_1}+ \frac{1}{1-\rho_1} \right).$$

We can then observe that the system:
\begin{equation}
\left\{
    \begin{array}{ll}
  \partial_t \rho_1 +  \partial_\parallel q= 0 \\
  \partial_t q + \partial_\parallel(\frac{q^2}{\rho_1}) = -\rho_1 \partial_\parallel p(t,x_\parallel) \\
\end{array}
  \right.
\end{equation}
is elliptic in space-time, and consequently it is ill-posed in Sobolev spaces.

Actually this example is not completely satisfying, since it is singular in $x_1$. Nevertheless we can consider the convolution of this initial data with a standard mollifier, which yields the same qualitative behaviour.

\subsection{On the analytic regularity in the perpendicular variable}
\label{sec-analytic2}

We observe that if the initial datum $(\rho(0),v(0))$ does not depend on $x_\parallel$, then the fluid system (\ref{simple}) reduces to:
\begin{equation}
\left\{
    \begin{array}{ll}
  \partial_t \rho + \nabla_\perp (E^\perp \rho) = 0 \\
  \partial_t v_\parallel + \nabla_\perp (E^\perp v_\parallel) =0 \\
  E^\perp= \nabla^\perp \Delta_\perp^{-1} \left(\rho - \int \rho dx_\perp\right)\\
  \int \rho dx_\perp = 1.\\
\end{array}
  \right.
\end{equation}

Thus, $\rho$ satisfies $2D$ incompressible Euler system, written in vorticity formulation.  This systems admits a unique global strong solution provided that $\rho(0) \in H^s(\mathbb{T}^2)$ (with $s>1$), by a classical result of  Kato \cite{Kat} and even a unique global weak solution provided that $\rho(0) \in L^\infty(\mathbb{T}^2)$, by a classical result of Yudovic \cite{Yud}.

In the other hand, $v_\parallel$ satisfied a transport equation with the force field $E^\perp$. If we assume for instance that $v_0$ is a bounded measure, then using the classical log-Lipschitz estimate on $E^\perp$, we get a unique global weak solution $v_\parallel$ by the method of characteristics. 

One could think that it should be possible to build solutions to the final fluid system (\ref{simple}) with similar ``weak'' regularity in the $x_\perp$ variable (while keeping analyticity in the $x_\parallel$ variable).
Actually this is not possible in general: this is related to the fact that $E^\perp$ depends also on $x_\parallel$ and this entails that we also need analytic regularity in the $x_\perp$ variable to get analytic regularity in the $x_\parallel$ variable (see estimations such as (\ref{reg-rho})).

\subsection{On the local in time existence}
\label{sec-local}

In \cite{Br3}, Brenier considers potential velocity fields,  that is velocity fields of the form $v_\Theta = \nabla_x \Phi_\Theta$, for the multi-fluid system:
\begin{equation}
\left\{
    \begin{array}{ll}
    \Theta=1,...,M \quad M\in \mathbb{N}^* \\
  \partial_t \rho_\Theta  + \operatorname{div}( \rho_\Theta v_\Theta)= 0 \\
  \partial_t v_\Theta +  v_\Theta.\nabla(v_\Theta) = E \\
  \operatorname{rot} E = 0 \\
  \sum_{\Theta=1}^M \rho_\Theta  = 1.\\
\end{array}
  \right.
\end{equation}
In this case the equation on the velocities becomes:
\begin{equation}
\partial_t \Phi_\Theta + \frac 1 2 \vert \nabla_x  \Phi_\Theta \vert^2 +p =0.
\end{equation}
It is proved in \cite{Br3} that any strong solution satisfying 
\[
\inf_{\Theta, t, x} \rho_\Theta(t,x) >0
\] 
can not be global in time unless the initial energy vanishes:
\begin{equation}
\sum_{\Theta=1}^M \int \rho_{\Theta, t=0} \vert u_{\Theta, t=0} \vert^2 dx =0.
\end{equation}
This striking result relies on a variational interpretation of these Euler equations.
Using the same particular initial data as in section \ref{sec-analytic1}, this indicates that for system (\ref{simple}) also, there is no global strong solution, unless there is no dependence on $x_\perp$ or $x_\parallel$.

We observe that if the initial datum $(\rho(0),v(0))$ does not depend on $x_\perp$, the fluid system (\ref{simple}) does not make sense anymore (as for incompressible Euler in dimension $1$). When the initial datum $(\rho(0),v(0))$ does not depend on $x_\parallel$, we have seen that we recover $2D$ incompressible Euler and there is indeed global existence (of strong or weak solutions).

\subsection{The relative entropy method applied to a toy model : failure of the multi-current limit}
\label{sec-rela}

It seems very appealing to try to use the relative entropy method (which was introduced by Brenier \cite{Br2} for Vlasov type of systems) to study the limit, as it would open the way to the study of the limit for solutions to the initial system (\ref{kinbegin}) with low regularity. The only requirement would be that the two first moments of the initial data for (\ref{kinbegin}) are in a small neighborhood (say  in $L^2$ topology) of the smooth initial data for the limit system (\ref{simple}). 
Nevertheless it is not possible to overcome the two-stream instabilities in this framework. We intend to show why.

Let us consider the toy model:
\begin{equation}
\label{toy}
\left\{
    \begin{array}{ll}
  \partial_t f_\epsilon^\theta + v.\nabla_x f_\epsilon^\theta +E_\epsilon.\nabla_v f^\theta_\epsilon=0\\
  E_\epsilon=-\nabla_x V_\epsilon\\
  -\epsilon\Delta_x V_\epsilon = \int \int f_\epsilon^\theta dv d\mu -1\\
  f^\theta_{\epsilon}(t=0)= f^\theta_{\epsilon,0}.
\end{array}
  \right.
\end{equation}
with $t>0$, $x \in \mathbb{T}^3$, $v \in \mathbb{R}^3$ and where $\theta$ lies in $[0,1]$ equipped with a positive measure $\mu$ which is:
\begin{itemize}

\item either a sum of Dirac masses with total mass $1$, such as:
\[
\mu=\sum_{i=0}^{N-1} \frac{1}{N} \delta_{\theta= i/N}.
\]
In this case, we model a plasma made of $N$ phases. 

\item or the Lebesgue measure, in which case we model a continuum of phases.

\end{itemize}

Actually, we could have considered more general Borel measures but we restrict to these cases for simplicity.
This system can be seen as the kinetic counterpart of a simplified version of (\ref{sys}), which focuses on the unstable feature of the system. Of course we could have considered directly the fluid version, that is: 
\begin{equation}
\left\{
    \begin{array}{ll}
  \partial_t \rho_\epsilon^\theta + \nabla_x (\rho_\epsilon^\theta u^\theta_\epsilon )=0\\
  \partial_t u_\epsilon^\theta + u^\theta_\epsilon.\nabla_x u_\epsilon^\theta =E_\epsilon\\
  E_\epsilon=-\nabla_x V_\epsilon\\
  -\epsilon\Delta_x V_\epsilon = \int \int f_\epsilon^\theta dv d\mu -1\\
\end{array}
  \right.
\end{equation}
but the proofs are essentially the same and the study of system (\ref{toy}) has some interests of its own.

We consider global weak solutions to (\ref{toy}), in the sense of Arsenev \cite{Ar}.
We recall that the energy associated to (\ref{toy}) is the following non-increasing functional:
\begin{equation}
\mathcal{E}_\epsilon(t)  = \frac 1 2\int \int f_\epsilon^\theta \vert v\vert^2 dvdx d\mu + \frac 1 2\epsilon \int \vert \nabla_x V_\epsilon \vert^2 dx.
\end{equation}

We assume that there exists a constant $K>0$ independent of $\epsilon$, such as $\mathcal{E}_\epsilon(0) \leq K$. This implies that for any $\epsilon$ and $t>0$:
\begin{equation}
\mathcal{E}_\epsilon(t) \leq K.
\end{equation} 


Let $(\rho^\theta,u^\theta)$ be the local strong solution, to the system:
\begin{equation}
\left\{
    \begin{array}{ll}
  \partial_t \rho^\theta + \nabla_x.(\rho^\theta u^\theta)=0\\
  \partial_t u^\theta+ u^\theta.\nabla_x u^\theta =-\nabla_x V \\
 \int \rho^\theta d\mu =1.\\
\end{array}
  \right.
\end{equation}
with inital data $(\rho_0^\theta, u_0^\theta)$ (which we a priori have to take with analytic regularity).
The ``incompressibility'' constraint reads:
\begin{equation}
\nabla_x . \int \rho^\theta u^\theta d\mu =0.
\end{equation}

Following the approach of Brenier \cite{Br2} for the quasineutral limit with a single phase, we consider the relative entropy (built as a modulation of the energy $\mathcal{E}_\epsilon$):
\begin{equation}
\mathcal{H}_\epsilon(t)  = \frac 1 2 \int \int f_\epsilon^\theta \vert v-u^\theta(t,x)\vert^2 dvdx d\mu + \frac 1 2\epsilon \int \vert \nabla_x V_\epsilon - \nabla_x V\vert^2 dx.
\end{equation}

We assume that the system is well prepared in the sense that $\mathcal{H}_\epsilon(0)  \rightarrow 0$. The goal is to find some stability inequality in order to show that we also have $\mathcal{H}_\epsilon(t)  \rightarrow 0 $.

Our aim here, is to show why the method fails unless $u^\theta$ actually does not depend on $\theta$. This can be interpreted as the effect of the two-stream instabilities \cite{CGG}.

We have, since the energy is non-increasing:
\begin{equation}
\label{comp}
\begin{split}
\frac{d}{dt}\mathcal{H}_\epsilon(t)  \leq \int \int \partial_t f_\epsilon^\theta \left(\frac 1 2 \vert u^\theta\vert^2 -v .u^\theta\right)dvdx d\mu +  \int \int f_\epsilon^\theta \partial_t \left(\frac 1 2 \vert u^\theta \vert ^2 -v. u^\theta \right)dvdx d\mu \\
+ \frac 1 2\epsilon \int \partial_t \vert\nabla_x V\vert^2 dx - \epsilon \int \nabla_x V_\epsilon. \partial_t \nabla_x V dx
- \epsilon \int \partial_t \nabla_x V_\epsilon. \nabla_x V dx.
\end{split}
\end{equation}

We clearly have $ \epsilon \int \partial_t \vert\nabla_x V\vert^2 dx=\mathcal{O}(\epsilon)$. Moreover, we get, using the conservation of energy, 
\[
\epsilon \Big\vert \int \nabla_x V_\epsilon. \partial_t \nabla_x V dx \Big\vert \leq \sqrt{\epsilon} \Vert \sqrt \epsilon \nabla_x V_\epsilon\Vert_{L^2_x}  \Vert \partial_t \nabla_x V\Vert_{L^2_x},
\]
which is of order $\mathcal{O}(\sqrt \epsilon)$.

For the last term of (\ref{comp}), we compute:
\begin{equation}
\begin{split}
- \epsilon \int \partial_t \nabla_x V_\epsilon.\nabla_x V dx =&  \epsilon \int \partial_t \Delta_{x} V_\epsilon V dx \\
=& -\epsilon \int \partial_t \left(\int f_\epsilon^\theta dv d\mu\right) V dx \\
=& + \int \nabla_x.\left(\int f_\epsilon^\theta v dv d\mu\right) V dx \\
=&- \int \left(\int f_\epsilon^\theta v dv d\mu\right) \nabla_x V dx .
\end{split}
\end{equation}
In this computation we have used the local conservation of mass:
\[
\partial_t \int f_\epsilon^\theta dv + \nabla_x. \left( \int v f_\epsilon^\theta  dv\right) =0.
\]

In the other hand we can compute:
\begin{equation}
\begin{split}
&\int \int \partial_t f_\epsilon^\theta \left(\frac 1 2 \vert u^\theta \vert ^2 -v. u^\theta\right)dvdx d\mu +  \int \int f_\epsilon^\theta \partial_t \left(\frac 1 2 \vert u^\theta \vert ^2 -v. u^\theta\right)dvdx d\mu \\
=& \int \int f_\epsilon^\theta(u^\theta-v)  .(u^\theta-v).\nabla_x u_\theta dv dx d\mu +
\int\int f_\epsilon^\theta(u^\theta-v).(\partial_t u^\theta +u^\theta\nabla_x u^\theta) dvdx d\mu \\
 -& \int f_\epsilon^\theta E_\epsilon. u^\theta dv dx d\mu.
\end{split}
\end{equation}

All the trouble comes from this last term. When no assumption is made on $u^\theta$, it can be of order $\mathcal{O}(1/\sqrt{\epsilon})$. This wild term can be interpreted as the appearance of the two-stream instabilities.

Therefore we have to make an additional assumption in order to avoid this instability. This is done by assuming that $u^\theta$  initially does not depend on $\theta$ (which yields that $u_\theta$ does not depend on $\theta$ by uniqueness , in which case we can write:
\[
u_\theta=u 
\]
and consequently, we have
\begin{equation}
\begin{split}
- \int f_\epsilon^\theta E_\epsilon. u dv dx d\mu =& \int \left(\epsilon\Delta_x V_\epsilon -1\right)E_\epsilon. u dx .
\end{split}
\end{equation}

In addition, the incompressibility constraint becomes $\nabla_x.u=0$, and thus: 

$$\int E_\epsilon u dx
= \int V_\epsilon \nabla_x.u dx=0.$$

Furthermore, we have:

\begin{equation}
\begin{split}
\int \left(\int f_\epsilon^\theta dv d\mu\right) u. \nabla_x V dv =& \int u. \nabla_x V - \epsilon \int \Delta_x V_\epsilon u. \nabla_x V   \\
\end{split}
\end{equation}
The first term is equal to $0$ according to the incompressibility constraint, while the second is of order $\mathcal{O}({\sqrt{\epsilon}})$, by the energy inequality.

We finally get the stability inequality:
\begin{equation}
\begin{split}
\mathcal{H}_\epsilon(t) \leq \mathcal{H}_\epsilon(0) + R_\epsilon(t) + C\int_0^t \Vert \nabla_x u \Vert \mathcal{H}_\epsilon(s)ds\\
+ \int_0^t \int \int f_\epsilon^\theta (u-v) (\partial_t u + u.\nabla_x u +\nabla_x V) d\mu dv dx ds,
\end{split}
\end{equation}
with $R_\epsilon(t)\rightarrow 0$ as $\epsilon$ goes to $0$ and the last term is $0$ by definition of $u,V$.

As as result, by Gronwall's inequality, $\mathcal{H}_\epsilon(t) \rightarrow 0$, uniformly locally in time.

To conclude, we notice that $\rho^\theta_\epsilon:=\int f_\epsilon^\theta dv$ is uniformly bounded in $L^\infty_t(L^1_{\theta,x})$. By the energy inequality, we can easily show that this is also the case for $J_\epsilon^\theta:= \int f_\epsilon^\theta v dv$. Thus, up to a subsequence, there exist $\rho_\theta$ and $J^\theta$ such that $\rho_\epsilon^\theta$ weakly-* converges in the sense of measures to $\rho^\theta$ (resp. $J_\epsilon^\theta$ to $J^\theta$). Passing to the limit in the local conservation of charge, which reads:
\[
\partial_t \rho_\epsilon^\theta+ \nabla_x J_\epsilon^\theta=0,
\]
we obtain:
\[
\partial_t \rho^\theta+ \nabla_x J^\theta=0.
\]

The goal is now to prove that $J^\theta = \rho^\theta u$.

By a simple use of Cauchy-Schwarz inequality, we have:
\begin{equation}
\int \int \frac{\vert \rho_\epsilon^\theta u - J_\epsilon^\theta\vert}{\rho^\theta_\epsilon} dx d\mu \leq \int \int f_\epsilon^\theta\vert v- u\vert^2 dv dx d\mu.
\end{equation}

Using a classical convexity argument due to Brenier \cite{Br},  the functional $(\rho,J) \mapsto \int \frac{\vert \rho u-J\vert}{\rho} dxd\mu$ is lower semi-continuous with respect to the weak convergence of measures.  We finally obtain by passing to the limit that:
\[
J^\theta = \rho^\theta u.
\]

By uniqueness of the solution to the limit system, provided that  the whole sequence $\rho_{\epsilon,0}^\theta$ weakly converges to $\rho^\theta_0$, we obtain the convergences without having to extract subsequences.

Finally we have proved the result:

\begin{prop}
Let $(f_\epsilon^\theta,E_\epsilon)$ be a global weak solution to (\ref{toy}) such that the local conservation of charge and current are satisfied. Assume that for some smooth functions $(\rho_0^\theta, u_0)$ (we emphasize on the fact that $u_0$ does not depend on $\theta$, in order to avoid two-stream instabilities) satisfying:
\begin{equation}
\left\{
    \begin{array}{ll}
\int \rho_0^\theta d\mu=1, \\
\nabla_x. u_0=0,\\
\end{array}
  \right.
\end{equation}
we have:
\begin{equation}
 \frac 1 2 \int \int f_\epsilon( t=0)^\theta \vert v-u_0(x)\vert^2 dvdx d\mu + \frac 1 2\epsilon \int \vert \nabla_x V_\epsilon(t=0) - \nabla_x V\vert^2 dx \rightarrow 0
\end{equation}
and $\int f_\epsilon^\theta dv \rightharpoonup \rho_0^\theta$ in the weak sense of measures.
Then,
\begin{equation}
 \frac 1 2 \int \int f_\epsilon^\theta \vert v-u(t,x)\vert^2 dvdx d\mu + \frac 1 2\epsilon \int \vert \nabla_x V_\epsilon - \nabla_x V \vert^2 dx \rightarrow 0 ,
\end{equation}
where $(u,V)$ is the local strong solution to the incompressible Euler  system:
\begin{equation}
\left\{
    \begin{array}{ll}
  \partial_t u + u.\nabla_x u =-\nabla_x V \\
\nabla_x u=0.\\
\end{array}
  \right.
\end{equation}

Moreover, $\rho_\epsilon^\theta:=\int f_\epsilon^\theta dv$ converges in the weak sense of measures to $\rho^\theta$ solution to:
\begin{equation}
\partial_t \rho^\theta + u. \nabla_x \rho^\theta=0.
\end{equation}
and $J_\epsilon^\theta:= \int f_\epsilon^\theta v dv$ converges in the weak sense of measures to $\rho^\theta u$.
\end{prop}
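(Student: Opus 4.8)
The plan is to run a modulated-energy (relative entropy) argument and close a Gronwall inequality for $\mathcal{H}_\epsilon(t)$. Since the total energy $\mathcal{E}_\epsilon$ is non-increasing, differentiating the modulation and discarding the favorable part of $\tfrac{d}{dt}\mathcal{E}_\epsilon$ produces the inequality (\ref{comp}). First I would expand its right-hand side: using the Vlasov equation in (\ref{toy}) and integrating by parts in $x$ and $v$, the two kinetic contributions combine into a strain term
\[
\int\!\!\int f_\epsilon^\theta (u^\theta-v)\cdot(u^\theta-v)\cdot\nabla_x u^\theta\, dv\,dx\,d\mu + \int\!\!\int f_\epsilon^\theta (u^\theta-v)\cdot(\partial_t u^\theta + u^\theta\cdot\nabla_x u^\theta)\, dv\,dx\,d\mu,
\]
together with the force term $-\int f_\epsilon^\theta E_\epsilon\cdot u^\theta\, dv\,dx\,d\mu$; the purely electrostatic terms are treated through the Poisson equation and the energy bound $\|\sqrt\epsilon\,\nabla_x V_\epsilon\|_{L^2}\leq C$, and contribute only $\mathcal{O}(\sqrt\epsilon)$.

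The decisive step is the control of the force term, which is generically of order $1/\sqrt\epsilon$ and encodes the two-stream instability. Here I would use the standing hypothesis that $u_0$, hence by uniqueness $u^\theta=u$, is independent of $\theta$ and divergence free. Integrating in $\theta$ first, the Poisson equation turns $\int\!\int f_\epsilon^\theta\, dv\,d\mu$ into $\epsilon\Delta_x V_\epsilon+1$, after which integrating by parts against the fields and invoking $\nabla_x\cdot u=0$ makes the leading contributions vanish (through $\int V_\epsilon\,\nabla_x\cdot u\,dx=0$ and $\int u\cdot\nabla_x V\,dx=0$), leaving again only $\mathcal{O}(\sqrt\epsilon)$. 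This is exactly the mechanism that breaks down when $u^\theta$ genuinely depends on $\theta$, and it is the main obstacle one is maneuvering around.

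With the force term eliminated, the first strain term is bounded by $C\|\nabla_x u\|_{L^\infty}\mathcal{H}_\epsilon$, while the remaining kinetic term $\int\!\int f_\epsilon^\theta(u-v)\cdot(\partial_t u + u\cdot\nabla_x u+\nabla_x V)\, dv\,dx\,d\mu$ vanishes identically because $(u,V)$ solves incompressible Euler. This yields the stability inequality
\[
\mathcal{H}_\epsilon(t)\leq \mathcal{H}_\epsilon(0)+R_\epsilon(t)+C\int_0^t \|\nabla_x u(s)\|_{L^\infty}\,\mathcal{H}_\epsilon(s)\,ds,
\]
with $R_\epsilon(t)\to 0$, so that Gronwall's lemma together with the well-preparedness $\mathcal{H}_\epsilon(0)\to 0$ gives $\mathcal{H}_\epsilon(t)\to 0$ locally uniformly in time.

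It remains to pass to the limit in the moments. The energy bound gives uniform $L^\infty_t(L^1_{\theta,x})$ control of $\rho_\epsilon^\theta=\int f_\epsilon^\theta\,dv$ and $J_\epsilon^\theta=\int f_\epsilon^\theta v\,dv$, so up to a subsequence they converge weakly-$*$ as measures to limits $\rho^\theta$ and $J^\theta$, and passing to the limit in $\partial_t\rho_\epsilon^\theta+\nabla_x J_\epsilon^\theta=0$ gives $\partial_t\rho^\theta+\nabla_x J^\theta=0$. To identify $J^\theta=\rho^\theta u$, I would use the Cauchy–Schwarz bound
\[
\int\!\!\int \frac{|\rho_\epsilon^\theta u-J_\epsilon^\theta|}{\rho_\epsilon^\theta}\, dx\,d\mu \leq \int\!\!\int f_\epsilon^\theta|v-u|^2\, dv\,dx\,d\mu \leq 2\,\mathcal{H}_\epsilon(t)\to 0,
\]
combined with Brenier's convexity argument, which gives lower semicontinuity of $(\rho,J)\mapsto\int\frac{|\rho u-J|}{\rho}$ under weak convergence of measures. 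Uniqueness of the limiting Euler solution finally upgrades all the subsequential convergences to convergence of the full family.
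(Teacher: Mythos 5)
Your proposal is correct and follows essentially the same route as the paper: the modulated-energy functional $\mathcal{H}_\epsilon$, the $\mathcal{O}(\sqrt{\epsilon})$ treatment of the electrostatic terms via the Poisson equation and the energy bound, the neutralization of the dangerous force term using $u^\theta=u$ and $\nabla_x\cdot u=0$, the Gronwall closure, and the identification of the limiting moments through the Cauchy--Schwarz bound and Brenier's lower semicontinuity argument. The only (inconsequential) blemish is the sign in rewriting $\int\!\int f_\epsilon^\theta\,dv\,d\mu$ via the Poisson equation, which should read $1-\epsilon\Delta_x V_\epsilon$.
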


\section{Conclusion}

In this work, we have provided a first analysis of the mathematical properties of the three-dimensional finite Larmor radius approximation (FLR), for electrons in a fixed background of ions. We have shown that the limit is unstable in the sense that we have to restrict to data with both particular profiles and analytic data. In particular we have pointed out that the analytic assumption is not only a mathematical technical assumption, but is necessary to have strong solutions. In addition the results are only local-in-time.

On the other hand, we proved in \cite{DHK1} that the FLR approximation for ions with massless electrons is by opposition very stable, in the sense that we can deal with initial data with no prescribed profile and weak (that is in a Lebesgue space) regularity.

This rigorously justifies why physicists rather consider the equations on ions rather than those on electrons, especially for numerical experiments (we refer for instance to Grandgirard et al. \cite{Gra}).

\section{Appendix : Formal derivation of the drift-fluid problem}

\label{formal}
\subsection*{Scaling of the Vlasov equation}

Let us recall that our purpose is to describe the behaviour of a gas of electrons in a neutralizing background of ions at thermodynamic equilibrium, submitted to a large magnetic field. For simplicity, we consider a magnetic field with a fixed direction $e_\parallel$ (also denoted by $e_z$) and a fixed large magnitude $\bar B$.


Because of the strong magnetic field, the dynamics of particles in the parallel direction $e_\parallel$  is completely different to their dynamics in the orthogonal plane. We therfore consider this time anisotropic characteristic spatial lengths:
$$\tilde{x}_\perp =\frac{x_\perp}{L_\perp}, \quad  \tilde{x}_\perp =\frac{x_\parallel}{L_\parallel,}$$

$$\tilde{t}= \frac t \tau \quad  \tilde{v}= \frac v {v_{th}},$$
$$f(t,x_\perp,x_\parallel,v)= \bar f \tilde{f}(\tilde{t},\tilde{x}_\perp,\tilde{x}_\parallel,\tilde{v}) \quad V(t,x_\perp,x_\parallel)= \bar V \tilde V(\tilde{t},\tilde{x}_\perp,\tilde{x}_\parallel) \quad E(t,x_\perp,x_\parallel)=\bar E \tilde E(\tilde t, \tilde{x}_\perp,\tilde{x}_\parallel). $$

The subscript $\perp$ stands for the orthogonal projection on the perpendicular plane (to the magnetic field), while the subscript $\parallel$ stands for the projection on the parallel direction.

This yields:

   \begin{equation}
\left\{
    \begin{array}{ll}
  \partial_{\tilde t} \tilde f_\epsilon +  \frac{v_{th}\tau}{L_\perp}\tilde v_\perp.\nabla_{\tilde x_\perp}\tilde f_\epsilon +  \frac{v_{th}\tau}{L_\parallel}\tilde v_\parallel.\nabla_{\tilde x_\parallel}\tilde f_\epsilon +\left(\frac{e\bar E \tau}{mv_{th}}\tilde E_\epsilon+ \frac{e \bar B}{m} \tau  \tilde v \wedge e_\parallel\right).\nabla_{\tilde v} \tilde f_\epsilon =0  \\
 \frac{\bar E }{\bar V} \tilde E_\epsilon = \left(-\frac{1}{L_\perp}\nabla_{\tilde x_\perp} \tilde V_\epsilon ,-\frac{1}{L_\parallel}\nabla_{\tilde x_\parallel} \tilde V_\epsilon\right)\\
-\frac{\epsilon_0\bar V}{L_\perp^2}\Delta_{\tilde x_\perp} \tilde V_\epsilon  -\frac{\epsilon_0\bar V}{L_\parallel^2}\Delta_{\tilde x_\parallel} \tilde V_\epsilon =e\bar f v_{th}^3\left(\int \tilde f_\epsilon d \tilde v - 1\right)\\
     \tilde f_{\epsilon,\vert \tilde t=0} =\tilde f_{0,\epsilon}, \quad \bar f L^3 v_{th}^3\int \tilde f_{0,\epsilon} d \tilde vd \tilde x =1.\\
    \end{array}
  \right.
\end{equation}

We set $\Omega=  \frac{e\bar B}{m}$ : this is the cyclotron frequency (also referred to as the gyrofrequency)
We also consider the so-called electron Larmor radius (or electron gyroradius) $r_L$ defined by:

\begin{equation}
r_L=\frac{ v_{th}}{\Omega}= \frac{m v_{th}}{e \bar B}\end{equation}
This quantity can be physically understood as the typical radius of the helix around axis $e_\parallel$ described by the particles, due to the intense magnetic field.

The Vlasov equation now reads:
$$ \partial_{\tilde t} \tilde f_\epsilon +  \frac{r_L}{L_\perp}\Omega \tau\tilde v_\perp.\nabla_{\tilde x_\perp}\tilde f_\epsilon +  \frac{r_L}{L_\parallel}\Omega \tau\tilde v_\parallel.\nabla_{\tilde x_\parallel}\tilde f_\epsilon +\left(\frac{\bar E }{\bar B v_{th}}\Omega \tau\tilde E_\epsilon+ \Omega \tau   \tilde v \wedge e_\parallel\right).\nabla_{\tilde v} \tilde f_\epsilon =0 . 
$$

The gyrokinetic ordering consists in:
$$\Omega \tau =\frac 1 \epsilon, \quad \frac{\bar E }{\bar B v_{th}}=\epsilon.$$

The spatial scaling we perform is the so-called finite Larmor radius scaling (see Fr\'enod and Sonnendrucker \cite{FS2} for a reference in the mathematical literature): basically the idea is to consider the typical perpendicular spatial length $L_\perp$ with the same order as the so-called electron Larmor radius.

On the contrary, the parallel observation length $L_\parallel$ is taken much larger:
\begin{equation}
\frac{r_L}{L_\perp} = 1 , \quad \frac{r_L}{L_\parallel} = \epsilon.  
\end{equation}
This is typically an anisotropic situation.

This particular scaling allows, at least in a formal sense, to observe more precise effects in the orthogonal plane than with the isotropic scaling (studied for instance in \cite{GSR1}):

$$
\frac{r_L}{L_\perp} = \epsilon , \quad \frac{r_L}{L_\parallel} = \epsilon.  $$

 In particular we wish to observe the so-called electric drift $E^\perp$ (also referred to as the $E \times B$ drift) whose effect is of great concern in tokamak physics (see \cite{DHK2} for instance).

The quasineutral ordering we adopt is the following:

\begin{equation}
\frac{\lambda_D} {L_\parallel} = \sqrt{\epsilon}.
\end{equation}

After straightforward calculations (we refer to \cite{FS2} for details), we get the following Vlasov-Poisson system in dimensionless form, for  $t\geq 0, x=(x_\perp,x_\parallel)  \in \mathbb{T}^2\times\mathbb{T}, v=(v_\perp,v_\parallel) \in\mathbb{R}^2\times \mathbb{R}$:

  \begin{equation}
\label{fix}
\left\{
    \begin{array}{ll}
    \partial_{t} f_\epsilon + \frac{v_\perp}{\epsilon}.\nabla_{x} f_\epsilon + v_\parallel.\nabla_{x} f_\epsilon + (E_\epsilon+ \frac{v\wedge e_z}{\epsilon}).\nabla_{v} f_\epsilon = 0 \\
  E_\epsilon= (-\nabla_{x_\perp} {V}_\epsilon, -\epsilon\nabla_{x_\parallel} {V}_\epsilon)  \\
  -\epsilon^2\Delta_{x_\parallel} {V}_\epsilon -\Delta_{x_\perp} {V}_\epsilon = \int f_\epsilon dv - \int f_\epsilon dvdx\\
     f_{\epsilon, t=0}=f_{\epsilon,0}.
\end{array}
  \right.
\end{equation}

\begin{rque}
It seems physically relevant to consider scalings such as:
\begin{equation}
\lambda_D / L_\parallel \sim \epsilon^\alpha,
\end{equation}
with $\alpha \geq 1$. However with such a scaling, the systems seem too degenerate with respect to $\epsilon$ and we have not been able to handle this situation. The scaling we study is nevertheless relevant for some extreme magnetic regimes in tokamaks .
\end{rque}

\subsection*{Hydrodynamic equations}

In order to isolate this quasineutral problem, thanks to the linearity of the Poisson equation, we separate the electric field in two parts:

  \begin{equation}
\label{model}
\left\{
    \begin{array}{ll}
E=  E^1_\epsilon +  E^2_\epsilon, \\
  E^1_\epsilon= (-\nabla_{x_\perp} {V}_\epsilon, -\epsilon\nabla_{x_\parallel} {V}_\epsilon),  \\
  -\epsilon^2\Delta_{x_\parallel} {V}^1_\epsilon -\Delta_{x_\perp} {V}^1_\epsilon = \int f_\epsilon dv - \int f_\epsilon dvdx_\perp,\\
E^2_\epsilon= - \partial_{x_\parallel} V^2_\epsilon, \\
  -\epsilon\Delta_{x_\parallel} {V}^2_\epsilon= \int f_\epsilon dvdx_\perp - \int f_\epsilon dv dx. \\
\end{array}
  \right.
\end{equation}

In order to make the fast oscillations in time due to the term $\frac{v_\perp}{\epsilon}.\nabla_{x}$ disappear, we perform the same change of variables as in \cite{GHN}, to get the so-called gyro-coordinates:
\begin{equation}
x_g= x_\perp + v^\perp, v_g=v_\perp.
\end{equation}
We easily compute the equation satisfied by the new distribution function $g_\epsilon(t,x_g,v_g,v_\parallel)=f_\epsilon(t,x,v)$. 
\begin{eqnarray*}
 \partial_t g_\epsilon + v_\parallel \partial_{x_\parallel} g_\epsilon + E^1_{\epsilon, \parallel}(t,x_g-v_g^\perp)\partial_{v_\parallel} g_\epsilon +  E^2_{\epsilon}(t,x_{g,\parallel})\partial_{v_\parallel} g_\epsilon \\
+E^1_{\epsilon, \perp}(t,x_g- v_g^\perp).(\nabla_{v_g} g_\epsilon - \nabla^\perp_{x_g} g_\epsilon) + \frac{1}{\epsilon} v_g^\perp. \nabla_{v_g} g_\epsilon=0.
\end{eqnarray*}
Notice here that in the process, the so-called electric drift $E^\perp$ appears since:
$$
-E^1_{\epsilon, \perp}(t,x_g- v_g^\perp). \nabla^\perp_{x_g} g_\epsilon = E^{1,\perp}_{\epsilon}(t,x_g- v_g^\perp).  \nabla_{x_g} g_\epsilon.
$$

The equation satisfied by the charge density $\rho_\epsilon=\int g_\epsilon dv$ states:
\begin{equation}
 \partial_t \rho_\epsilon + \partial_{x_\parallel} \int v_\parallel g_\epsilon dv + \nabla^\perp_{x_g}\int E^1_{\epsilon, \perp}(t,x_g- v_g^\perp) g_\epsilon dv = 0,
\end{equation}
and the one satisfied by the current density $J_\epsilon=\int g_\epsilon v dv\left(= \begin{pmatrix} \int g_\epsilon v_\perp dv \\ \int g_\epsilon v_\parallel dv \end{pmatrix}\right)$ is the following:
\begin{eqnarray}
 \partial_t J_\epsilon + \partial_{x_\parallel} \int v_\parallel \begin{pmatrix} v_g \\ v_\parallel \end{pmatrix} g_\epsilon dv + \nabla^\perp_{x_g}\int E^1_{\epsilon, \perp}(t,x_g- v_g^\perp) \begin{pmatrix} v_g \\ v_\parallel \end{pmatrix} g_\epsilon dv  \nonumber \\ 
=\int \begin{pmatrix} E^1_{\epsilon, \perp}(t,x_g- v_g^\perp)  \\ 0 \end{pmatrix} g_\epsilon dv + \int    \begin{pmatrix} 0 \\ E^1_{\epsilon, \parallel}(t,x_g-v_g^\perp) \end{pmatrix}g_\epsilon dv \nonumber \\
 + \begin{pmatrix} 0 \\ E^2_{\epsilon}(t,x_{g,\parallel})\rho_\epsilon \end{pmatrix} + \frac{J_\epsilon^\perp}{\epsilon}.
\end{eqnarray}

We now assume that we deal with special monokinetic data of the form:
\begin{equation}
 g_\epsilon(t,x,v) = \rho_\epsilon(t,x)\mathbbm{1}_{v_\parallel=v_{\parallel,\epsilon}(t,x)}\mathbbm{1}_{v_g=0}.
\end{equation}

This assumption  is nothing but the classical ``cold plasma'' approximation together with the assumption that the transverse particle velocities are isotropically distributed (which is physically relevant, see \cite{Sul}) : in other words, the average motion of particles in the perpendicular plane is only due to the advection by the electric drift $E^\perp$. 


For the sake of readability, we denote by now  $\nabla_{x_g}= \nabla_\perp$ and $\nabla_{x_\parallel}= \nabla_\parallel$.
Then we get formally the hydrodynamic model:

\begin{equation}
 \left\{
    \begin{array}{ll}
  \partial_t \rho_\epsilon + \nabla_\perp (E^\perp_\epsilon \rho_\epsilon) + \partial_\parallel(v_{\parallel,\epsilon} \rho_\epsilon)= 0 \\
  \partial_t (\rho_\epsilon v_{\parallel,\epsilon}) + \nabla_\perp (E^\perp_\epsilon \rho_\epsilon v_{\parallel,\epsilon}) + \partial_\parallel(\rho_\epsilon v_{\parallel,\epsilon}^2) = -\epsilon\partial_\parallel \phi_\epsilon(t,x)\rho_\epsilon -\partial_\parallel V_\epsilon(t,x_\parallel)\rho_\epsilon \\
  E^\perp_\epsilon= -\nabla^\perp \phi_\epsilon \\
-\epsilon^2 \partial^2_{\parallel} \phi_\epsilon - \Delta_{\perp} \phi_\epsilon = \rho_\epsilon - \int \rho_\epsilon dx_\perp\\
  -\epsilon \partial_{\parallel}^2 V_\epsilon =\int \rho_\epsilon dx_\perp -1\\
\end{array}
  \right.
\end{equation}

One can use the first equation to simplify the second one (the systems are equivalent provided that we work with regular solutions and that $\rho_\epsilon>0$):

\begin{equation}
 \left\{
    \begin{array}{ll}
  \partial_t \rho_\epsilon + \nabla_\perp (E^\perp_\epsilon \rho_\epsilon) + \partial_\parallel(v_{\parallel,\epsilon} \rho_\epsilon)= 0 \\
  \partial_t v_{\parallel,\epsilon} + \nabla_\perp (E^\perp_\epsilon v_{\parallel,\epsilon}) + v_{\parallel,\epsilon} \partial_\parallel(v_{\parallel,\epsilon}) = -\epsilon\partial_\parallel \phi_\epsilon(t,x) -\partial_\parallel V_\epsilon(t,x_\parallel) \\
  E^\perp_\epsilon= -\nabla^\perp \phi_\epsilon \\
-\epsilon^2 \partial^2_{\parallel} \phi_\epsilon - \Delta_{\perp} \phi_\epsilon = \rho_\epsilon - \int \rho_\epsilon dx_\perp\\
  -\epsilon \partial_{\parallel}^2 V_\epsilon =\int \rho_\epsilon dx_\perp -1.\\
\end{array}
  \right.
\end{equation}

\begin{rques}\begin{enumerate}
\item  Notice here that we do not deal with the usual charge density and current density, since these ones are taken within the gyro-coordinates.

 \item We mention that we could have considered the more general case:
\begin{equation}
\label{gene}
 g_\epsilon(t,x,v) = \int_ M \rho_\epsilon^\Theta (t,x)\mathbbm{1}_{v_\parallel=v^\Theta_{\parallel,\epsilon}(t,x)} \nu(d\Theta) \mathbbm{1}_{v_g=0}
\end{equation}
where $(M,\Theta,\nu)$ is a probability space which allows to model more realistic plasmas than ``cold plasmas'' and covers many interesting physical data, like multi-sheet electrons or water-bags data (we refer for instance to \cite{BBBB} and references therein). We will not do so for the sake of readability but we could deal with it with exactly the same analytic framework: the analogues of Theorems \ref{exi} and \ref{con} identically hold. We get in the end the system:

\begin{equation}
\label{sysfinal}
\left\{
    \begin{array}{ll}
  \partial_t \rho^\Theta + \nabla_\perp (E^\perp \rho^\Theta) + \partial_\parallel(v^\Theta_\parallel \rho^\Theta)= 0 \\
  \partial_t v^\Theta_\parallel + \nabla_\perp (E^\perp v^\Theta_\parallel) + v^\Theta_\parallel \partial_\parallel(v^\Theta_\parallel) = -\partial_\parallel p(t,x_\parallel) \\
  E^\perp= \nabla^\perp \Delta_\perp^{-1} \left(\int \rho^\Theta d\nu - \int \rho^\Theta dx_\perp d\nu\right)\\
  \int \rho^\Theta(t,x) dx_\perp d\nu = 1.\\
\end{array}
  \right.
\end{equation}
As before, the equations are coupled through $x_\perp$ and here also through the new parameter $\Theta$.
 
 \item Actually, the choice:
\begin{equation}
 g_\epsilon(t,x,v) = \rho_\epsilon(t,x)\mathbbm{1}_{v=v_{\epsilon}(t,x)}
 \end{equation}
 leads to an ill-posed system. Indeed, we have to solve in this case equations of the form $v^\perp_\epsilon=v_{\epsilon,\perp}(t,x-v^\perp_\epsilon)$ where $v_{\epsilon,\perp}$ is the unknown. We can not say if this relation is invertible, even locally.

\end{enumerate}
\end{rques}

\begin{ack}
This work originated from discussions with Maxime Hauray; I would like to thank him for that.
\end{ack}

\bibliographystyle{plain}
\bibliography{Larmoranalytic}

\end{document}